\documentclass[a4paper,12pt]{amsart}
\usepackage{amsmath,amssymb,amsthm}
\usepackage{etoolbox}
\usepackage{url}
\usepackage{graphicx}
\usepackage{enumitem}
\usepackage{xcolor,colortbl}
\usepackage{mathtools}
\usepackage{enumitem}
\definecolor{refkey}{rgb}{0,0,1}
\definecolor{labelkey}{rgb}{0,0,1}
\usepackage{soul}
\usepackage{hyperref}
\usepackage{tabularx,booktabs}

\usepackage[margin=1.1in]{geometry}

\newtheorem{lemma}{Lemma}[section]
\newtheorem{theorem}[lemma]{Theorem}
\newtheorem{corollary}[lemma]{Corollary}
\newtheorem{proposition}[lemma]{Proposition}

\theoremstyle{remark}

\newtheorem{example}[lemma]{Example}
\AtEndEnvironment{example}{\null\hfill\qedsymbol}

\newcommand{\gp}{\mathsc{gp}}
\newcommand{\barf}{{f^*}}
\newcommand{\hatf}{{f^\star}}
\newcommand{\barF}{{F^*}}
\newcommand{\hatF}{{F^\star}}

\newcommand{\edgetree}{\mathcal{E}}
\newcommand{\archset}[1]{\mathcal{A}_{#1}}
\newcommand{\dualset}[1]{\mathcal{D}_{#1}}
\newcommand{\sbe}{\mathsc{sb}}

\newcommand{\map}[3]{{#1}:{#2}\rightarrow{#3}}

\newcommand{\ninv}{\mathrm{coinv}}
\newcommand{\ninvk}{\ninv_k}

\newcommand{\inv}{\mathrm{inv}}
\newcommand{\invk}{\inv_k}

\newcommand{\R}{\mathcal{R}}
\newcommand{\T}{\mathcal{T}}
\newcommand{\F}{\mathcal{F}}

\newcommand{\C}{\mathcal{C}}

\newcommand{\Sym}[1]{\mathfrak{S}_{#1}}

\newcommand{\area}{\mathrm{area}}
\newcommand{\coarea}{\mathrm{coarea}}
\newcommand{\p}{\mathcal{P}}

\newcommand{\barfset}{\F^*}
\newcommand{\hatfset}{{\F}^\star}
\newcommand{\disp}{\mathrm{disp}}

\newcommand{\depth}{\mathrm{dep}}

\newcommand{\chr}{\mathrm{chr}}
\newcommand{\cochr}{{\mathrm{cochr}}}
\newcommand{\col}{\kappa}

\newcommand{\he}{h}
\newcommand{\majle}{\mathrm{maj}}
\newcommand{\majre}{\mathrm{comaj}}

\newcommand{\mathsc}[1]{{\normalfont\textsc{#1}}}

\newcommand{\factmap}{\mathsc{fact}}

\newcommand{\arch}[1]{\mathsc{arch}(#1)}
\newcommand{\archmap}{\mathsc{arch}}
\newcommand{\dual}[1]{\mathsc{dual}(#1)}
\newcommand{\dualmap}{\mathsc{dual}}
\newcommand{\cay}[1]{\mathsc{cay}(#1)}
\newcommand{\caymap}{\mathsc{cay}}
\newcommand{\brk}[1]{\mathsc{lower}(#1)}
\newcommand{\brkmap}{\mathsc{lower}}
\newcommand{\hatbrk}[1]{\mathsc{upper}(#1)}
\newcommand{\hatbrkmap}{\mathsc{upper}}
\newcommand{\cda}[1]{\mathsc{cda}(#1)}
\newcommand{\cdamap}{\mathsc{cda}}

\newcommand{\expfuncmap}{\mathsc{expand}}
\newcommand{\jcdab}[1]{\mathsc{jcdal}(#1)}

\newcommand{\jcdabmap}{\mathsc{jcdal}}

\newcommand{\join}[1]{\mathsc{join}(#1)}

\newcommand{\joinmap}{\mathsc{join}}

\newcommand{\arealk}{\area_k}
\newcommand{\areauk}{\coarea_k}
\newcommand{\areal}{\area}
\newcommand{\areau}{\coarea}
\newcommand{\majl}{\mathrm{maj}}
\newcommand{\majr}{\mathrm{comaj}}
\newcommand{\majlk}{\mathrm{maj}_k}
\newcommand{\majrk}{\mathrm{comaj}_k}
\newcommand{\sarealk}{\mathrm{semiarea}_k}
\newcommand{\sareauk}{\mathrm{cosemiarea}_k}
\newcommand{\genkfact}[2]{(a_1^0\, \cdots\, a_1^{#1}) \cdots (a_{#2}^0\,
\cdots\, a_{#2}^{#1})}
\newcommand{\barroot}{\R^*}
\newcommand{\hatroot}{{\R}^{\star}}

\newcommand{\Fbar}{F^*}
\newcommand{\spann}{\mathrm{span}}

\newcommand{\s}{\sigma}

\newcommand{\tfact}{\tau}

\author{John Irving}
\address{Saint Mary's University, Halifax, NS, Canada}
\email{john.irving@smu.ca}
\author{Amarpreet Rattan}
\address{Department of Mathematics, Simon Fraser University, Burnaby, BC, Canada}
\email{rattan@sfu.ca}
\title[$k$-factorizations and Mahonian statistics on $k$-forests]{$k$-Factorizations of the full cycle and generalized Mahonian statistics on $k$-forests}

\begin{document}

\newcommand{\exclude}[1]{}
\newcommand{\TODO}[1]{\noindent{\color{blue} {\scshape To Do:} #1}}

\dedicatory{Dedicated to Ian Goulden and David Jackson on the occasion of
	their retirement.}

\begin{abstract}
We develop direct bijections between the set  $\F_n^k$ of minimal factorizations of the long cycle $(0\,1\,\cdots\, kn)$ into $(k+1)$-cycle factors and the set $\R_n^k$ of rooted labelled forests on vertices $\{1,\ldots,n\}$ with  edges coloured with $\{0,1,\ldots,k-1\}$ that map natural statistics on the former to generalized Mahonian statistics on the latter. In particular, we examine the generalized \emph{major index} on forests $\R_n^k$ and show that it has a simple and natural interpretation in the context of factorizations. Our results extend those in~\cite{irvrat}, which treated the case $k=1$ through a different approach, and provide a bijective proof of the equidistribution observed by Yan  \cite{yank} between displacement of $k$-parking functions and generalized inversions of $k$-forests. \end{abstract}

\maketitle

\section{Introduction}\label{sec:intro}

The aim of this article is to recast and generalize our earlier work~\cite{irvrat} concerning connections between rooted  forests, parking functions, and factorizations of cycles into transpositions. We begin by briefly reviewing  these   objects and the main result of~\cite{irvrat}.  Novel content begins in Section~\ref{sec:facts}.

The following notational conventions are used throughout.
For nonnegative integers $m \leq n$, let $[n] : = \{0, \dots,
n\}$ and $[m, n] = \{m, \ldots, n\}$.  
The symmetric group on $X \subseteq [n]$ is denoted $\Sym{X}$. Permutations $\s,\tau \in \Sym{X}$ are multiplied \emph{left to right}, and cycles in $\Sym{X}$ are always presented with least element first; i.e. in the form $(a_0\, a_1\, \cdots\, a_m)$ with $a_0 = \min_i a_i$    The canonical full cycle $(0\, 1\, 2\, \cdots\, n) \in \Sym{[n]}$ will be denoted $\s_n$.

\subsection{Mahonian Statistics on Rooted Forests}\label{sec:forests}

A \emph{rooted forest} is graph whose components are rooted trees, \emph{i.e.} trees with a distinguished vertex.
Let $\R_n$ be the set of rooted forests on vertices $[1,n]$.

For convenience we regard the edges of every  forest $F \in \R_n$  as being directed away from the roots of their components.  We identify an edge directed from $u$ to $v$ by the pair $(u,v)$.  If $F$ contains such an edge then we say $u$ is  the \emph{parent} of $v$ and $v$ is  a \emph{child} of $u$.  More generally,  $u$ is  an \emph{ancestor} of  $v$ --- and $v$ is a \emph{descendant} of $u$ --- if there is a nonempty directed path from $u$ to $v$.   The subtree of $F$ induced by $u$ and all its descendants is called the \emph{hook} at $u$.  We write $H(u)$ for this hook and $h(u)$ for  the number of vertices contained therein, commonly known as the \emph{hook length} at $u$.  The \emph{total depth} of $F$ is the sum of all non-root hook lengths,
$$
	\depth(F) := \sum_{(u,v) \in E(F)} h(v).
$$
Equivalently, this is the sum of the distances from all verties to the roots of their components, sometimes known as the  \emph{path length} of $F$.

The \emph{major} and \emph{comajor indices} of $F$ are defined by
\begin{equation*}
	\majl(F) := \sum_{u \in V(F)} h_L(u)\qquad \textnormal{ and
		}\qquad \majr(F) := \sum_{u \in V(F)} h_R(u),
\end{equation*}
where  
\begin{equation*}\label{eq:lrhook}
	h_{L}(u) := \sum_{\substack{(u,v) \in E(F) \\ v < u}} h(v)\qquad \textnormal{ and }\qquad h_R(u) := \sum_{\substack{(u,v) \in E(F) \\ u < v}} h(v).
\end{equation*}
We refer to the quantities $h_L(u)$ and $h_R(u)$ as the \emph{left} and \emph{right hook lengths} at $u$.   The rationale for this terminology will be apparent later.  
Note that  $h(u) = h_L(u) + h_R(u) + 1$ and thus
\begin{equation*}
	\majl(F) + \majr(F) = 
	\sum_{u \in V(F)} (h(u) - 1) =  \depth(F).
\end{equation*}

If $v$ is a descendant of $u$,  then the pair $(u,v)$ is said to be an \emph{inversion} of $F$ when $u > v$ and a \emph{coinversion} when $u < v$.  Let $\inv(u) := \#\{v \in H(u) \,:\, v < u\}$ and $\ninv(u) := \#\{v \in H(u) \,:\, v > u\}$ denote the  number of inversions and coinversions in $F$ of the form $(u,v)$ for some $v$.   Clearly $h(u)=\inv(u)+\ninv(u)+1$.
The \emph{inversion} and \emph{coinversion indices} of $F$ are defined by    
\begin{equation*}
	\inv(F) := \sum_{u \in V(F)} \inv(u) \qquad \textnormal{ and
		}\qquad \ninv(F) := \sum_{u \in V(F)} \ninv(u).
\end{equation*}
These are simply the total number of inversions and coinversions in $F$. Observe that $\inv(F) + \ninv(F)=\depth(F)$, because every pair of vertices
$(u,v)$ with $v \in H(u)$ is either an inversion or a
coinversion, but not both. 

Figure~\ref{fig:mahonian} shows a rooted forest along with several statistics.
\begin{figure}[tbp]
	\centering
	\includegraphics[width=.9\textwidth]{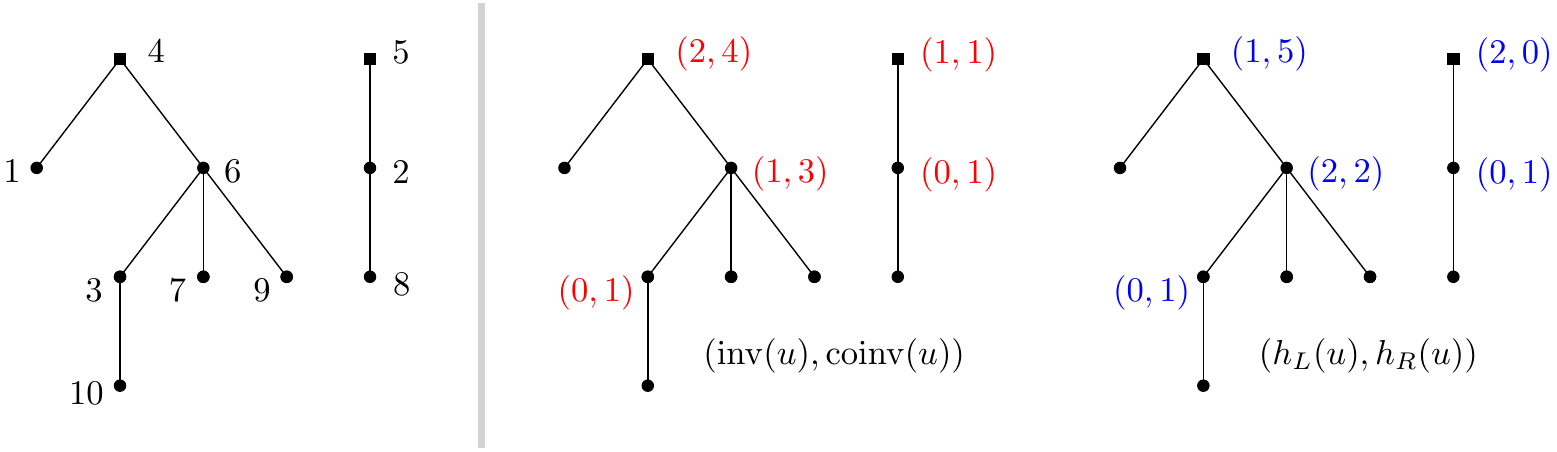}
	\caption{A forest $F \in \R_{10}$ (left) with $\depth(F)=14$, $\inv(F)=4$, $\ninv(F)=10$, $\majl(F)=5$ and $\majr(F)=9$. Statistics $(\inv(u),\ninv(u))$ and $(h_L(u),h_R(u))$ are shown beside non-leaf vertices $u$ in red and blue (right). }
	\label{fig:mahonian}
\end{figure}

The inversion/major indices on forests are generalizations of  well-known {Mahonian statistics} on $\Sym{n}$ of the same name. The extensions of $\inv(\cdot)$ and $\majl(\cdot)$ from $\Sym{n}$ to $\R_n$ are due to Mallows and Riordan \cite{mallowsriordan} and Bj\"orner and Wachs \cite{bjorn}, respectively.

Note that $\inv(F)$ and $\ninv(F)$ are equidistributed over $\R_n$, as can be seen by exchanging vertex labels $i$ and $n+1-i$. The same is true of $\majl(F)$ and $\majr(F)$.  Our interest lies in the joint distributions $(\inv,\ninv)$ and $(\majl,\majr)$, which turn out to coincide over $\R_n$ just as they do over $\Sym{n}$.\footnote{The well-known joint symmetry of $(\inv,\majl)$ over $\Sym{n}$ does \emph{not} extend to $\R_n$.} We will elaborate  on the relationship between these statistics in Section \ref{sec:main}.  

Let $\T_n$ be the set of  trees on vertices $[n]$. Note that removal of vertex 0 puts these trees in natural correspondence with rooted forests on $[1,n]$.  While we have cast our work in terms of rooted forests, all statements regarding $\R_n$ can be translated \emph{mutatis mutandis} to the language of trees. 

\subsection{Factorizations of Full Cycles}
\label{sec:factorizations}
\newcommand{\cycles}[2]{\C^{#1}_{#2}}

It is well known that every $r$-cycle $\s \in \Sym{[n]}$ can be expressed as a product of $r-1$ transpositions but no fewer.  Accordingly,  
a sequence $(\tau_1, \ldots,\tau_{r-1})$ of transpositions  satisfying  $\s=\tau_1\tau_2\cdots\tau_{r-1}$ is called a \emph{minimal factorization} of $\s$. For example, the canonical decompositions   
\begin{align}
\label{eq:expdecomp}
 (a_0\,a_1)(a_0\,a_2)\cdots(a_0\,a_r)
\end{align}
and
\begin{align}
\label{eq:upperdecomp}
 (a_0\,a_r)(a_1\,a_r)\cdots(a_{r-1}\,a_r)
\end{align}
are minimal factorizations of $\s=(a_0\, a_1\, \cdots\, a_r)$.  These will play  a central role in our analysis and we refer to them as the \emph{lower} and \emph{upper decompositions} of $\s$, respectively.

Let $\F_n$ be the set of minimal factorizations of the  full cycle $\s_n=(0\,1\,2\,\cdots\,n)$.  For example
\begin{align*}
	\F_{1} &= \{(0\; 1)\},\\
	\F_{2} &= \{(0\; 1) (0 \; 2), (0\; 2) (1\; 2), (1\; 2)
	(0\; 1)\}.
\end{align*}  
Minimal factorizations of a fixed full cycle have long been known to be  related to labelled trees (equivalently, rooted forests).   The identity $|\F_n|=(n+1)^{n-1}=|\R_n|$  dates back at least to Hurwitz but is often credited to D\'enes~\cite{denes}, who offered an elegant proof via indirect counting. Direct bijections between $\F_n$ and $\R_n$  came later.  The simplest of these, due to Moszkowski~\cite{mosz}, has been rediscovered in different guises by a number of authors. Its essence is the fact that trees are a special class of planar maps, and  minimal factorizations (broadly speaking) serve as combinatorial encodings of planar embeddings. A version of this bijection is central to our construction and described in Section~\ref{sec:arch}.

The connection between $\R_n$ and $\F_n$ can be refined  to account for  forest inversions/coinversions.  The corresponding statistics for factorizations, which we call \emph{area} and \emph{coarea}, are defined for $f=(a_1\, b_1)(a_2\, b_2)\cdots(a_n\, b_n) \in \F_n$ by
\begin{equation*}\label{def:areacoarea}
	\areal(f) =  \binom{n}{2} - \sum_{i=1}^n a_i
	\qquad \textnormal{ and }\qquad
  \areau(f) =  \sum_{i=1}^n (b_i-1) - \binom{n}{2}.
\end{equation*} 
With this terminology the main result of our previous paper~\cite{irvrat}  can be stated as follows.\footnote{In  \cite{irvrat} this result is phrased in terms of $\T_n$, and area and coarea are called \emph{lower} and \emph{upper area}, respectively.}	

\begin{theorem}
	For any $n \geq 0$, the bi-statistics $(\inv, \ninv)$ on $\R_n$ and $(\areal,
	\areau)$ on $\F_n$ share the same joint distribution.\label{thm:oldirvrat}
\end{theorem}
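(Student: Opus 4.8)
The plan is to construct an explicit bijection $\map{\Phi}{\F_n}{\R_n}$ and verify that it carries the pair $(\areal,\areau)$ to $(\inv,\ninv)$. The natural candidate is the Moszkowski-type bijection alluded to in the introduction, and I would build it inductively on $n$. Given $f=(a_1\,b_1)\cdots(a_n\,b_n)\in\F_n$, the key structural observation is that the last transposition $(a_n\,b_n)$ merges two cycles of the partial product $\tau=(a_1\,b_1)\cdots(a_{n-1}\,b_{n-1})$, and since that partial product is a minimal factorization of a product of two cycles partitioning $[n]$, one of those cycles contains $0$. I would peel off the element $b_n$ (or $a_n$, whichever is appropriate for the chosen convention): deleting the letter $n$ from $\s_n$ and suitably reindexing produces an element of $\F_{n-1}$, and recording how $n$ was attached tells us where to hang the vertex $n$ in the forest. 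Concretely, $\Phi(f)$ is obtained from $\Phi(f')$ by adding vertex $n$ as a child of the appropriate existing vertex, determined by the cycle structure of the partial products of $f$.

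First I would set up the induction carefully, making precise which letter is stripped and how the remaining factorization is relabelled to live in $\F_{n-1}$; this requires checking that stripping preserves the ``full cycle'' and ``minimal'' properties, which follows from the fact that a minimal factorization of a full cycle has a well-understood recursive ``planar'' structure (each transposition, read left to right, either splits a cycle or, read as building up, attaches a new point). Second, I would pin down the image: when we reinsert vertex $n$, it becomes a child of some vertex $p$, and I claim $p < n$ exactly when the reattachment of $n$ contributes to $\areau$ rather than $\areal$ — more precisely, the new edge $(p,n)$ is a coinversion $(p,n)$ with $p<n$ automatically, so the parity of the contribution is governed by whether $n$ is attached ``from the left'' or ``from the right'' in the factorization, i.e.\ whether $n$ appears as the larger or smaller entry $b_i$ or $a_i$ in the relevant transposition.

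Third, and this is the heart of the argument, I would track the statistics. The point is that $\areal(f) = \binom{n}{2} - \sum a_i$ and $\areau(f) = \sum(b_i-1) - \binom{n}{2}$ decompose additively over the transpositions, and I want to show each transposition $(a_i\,b_i)$ contributes to $\areal$ (resp.\ $\areau$) exactly the amount that the corresponding edge of $\Phi(f)$ contributes to $\inv$ (resp.\ $\ninv$) via the hook-length sums $\inv(F)=\sum_u \inv(u)$, $\ninv(F)=\sum_u\ninv(u)$. In the inductive step, adding vertex $n$ as a child of $p$ increases $\inv(F)$ by $\#\{$ancestors of $p$ that are $>n\}+[p>n]$ and $\ninv(F)$ by the complementary count; since $n$ is the largest label, this increment is just the number of strict-ancestors of $p$ (which all have labels $<n$, hence... ) — so I must match this against the change $\binom{n}{2}-\binom{n-1}{2}-a_n = n-1-a_n$ in $\areal$ (and $b_n-1-(n-1)=b_n-n$ in $\areau$). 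The bookkeeping is that $a_n$ and $b_n$ encode, respectively, how many of the ``old'' letters sit below/above the insertion point in the canonical traversal that the bijection implements, which is exactly the ancestor count in the forest.

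The main obstacle I expect is precisely this last matching: verifying that the value $a_n$ (resp.\ $b_n$) of the stripped transposition equals the number of left-descendants (resp.\ right-descendants) accumulated along the path in $\Phi(f)$, uniformly under the relabelling, so that the telescoping of the inductive increments reproduces the closed-form definitions of $\areal$ and $\areau$. This hinges on choosing the bijection so that its recursive structure is compatible with the canonical lower/upper decompositions \eqref{eq:expdecomp}--\eqref{eq:upperdecomp} of a single cycle — on such ``star'' factorizations the claim should be immediate (the forest is a single vertex with $n$ children, split into those smaller and larger), and the general case follows by composing these local pictures along the planar structure. A cleaner alternative, which I would pursue in parallel, is to avoid induction on $n$ and instead induct on the tree structure: write $F$ as a root $0$ with subtrees $F_1,\dots,F_m$ hanging off children $c_1>c_2>\cdots$, realize the factorization as a concatenation of the sub-factorizations of the $F_j$ glued by the appropriate ``connecting'' transpositions coming from the lower/upper decomposition of the star on $\{0,c_1,\dots,c_m\}$, and check additivity of all four statistics under this gluing in one stroke.
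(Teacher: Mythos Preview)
Your plan diverges from the paper's route and, as written, has a real gap. The paper does \emph{not} build a bijection carrying $(\areal,\areau)$ directly to $(\inv,\ninv)$. Instead it first constructs the non-recursive, geometric bijection $\cdamap=\caymap\circ\dualmap\circ\archmap$ via arch diagrams and their planar duals, and proves (Theorem~\ref{thm:kone}) that $\cdamap$ sends $(\areal,\areau)$ to $(\majl,\majr)$. The passage from major indices to inversions is then outsourced to the Bj\"orner--Wachs/Liang--Wachs/Grady--Poznanovi\'c bijection $\gp$, which acts within each isomorphism class of $\R_n$ and satisfies $\majl(F)=\inv(\gp(F))$; since $\depth$ is preserved this forces $\majr(F)=\ninv(\gp(F))$ as well. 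So the equidistribution of $(\inv,\ninv)$ and $(\areal,\areau)$ is obtained as a two-step composition, with the ``hard'' map hitting the major pair, not the inversion pair.

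The gap in your inductive scheme is exactly the one you flag but do not resolve. If you attach the \emph{largest} label $n$ as a new leaf, then every ancestor of $n$ has a smaller label, so the increment to $\inv$ is always $0$ while the increment to $\ninv$ is the depth of $n$. But on the factorization side, removing one transposition changes $\areal$ by $n-1-a_n$, which is generically nonzero. Hence no bijection built by ``strip the letter $n$ and recurse'' can send $(\areal,\areau)$ to $(\inv,\ninv)$ increment-by-increment in the way you describe; the statistic $\inv$ simply cannot grow under that kind of step. Your fallback of inducting on tree structure via the star decompositions \eqref{eq:expdecomp}--\eqref{eq:upperdecomp} is closer in spirit to the recursive argument of~\cite{irvrat}, but as stated it is only a programme: you would still need to specify how the sub-factorizations are relabelled and to prove that all four statistics are additive under that gluing, and the same obstruction (that local insertions of extremal labels are blind to $\inv$) must be handled, typically by passing through $\majl/\majr$ or by inserting vertices in a non-extremal order. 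The paper sidesteps all of this by working with the major pair, where the arch-diagram geometry gives a clean one-line computation (Lemma~\ref{thm:keylemma}).
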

 
 Our proof of Theorem~\ref{thm:oldirvrat} relied on generating series techniques but was effectively based on a recursive bijection. In Section~\ref{sec:kone} we shall reestablish this result by describing a natural and \emph{direct} bijection between $\F_n$ and $\R_n$ that maps $(\areal,\areau)$ to $(\majl,\majr)$.   This will serve as a base case toward  extending Theorem~\ref{thm:oldirvrat} to treat factorizations into $(k+1)$-cycles for arbitrary $k \geq 1$.  
 
\subsection{Minimal $k$-Factorizations}\label{sec:facts}

For $k,n \geq 1$, let  $\F_n^k$ be the set of all sequences $(\tfact_1,\ldots,\tfact_n)$ of $(k+1)$-cycles $\tfact_i \in \Sym{[kn]}$  such that $\tfact_1\tfact_2\cdots\tfact_n=\s_{kn}$.  In particular, we have $\F_n^1=\F_n$.

Certainly $\F_n^k$  is nonempty, as taking $\tfact_i=(0, in+1, in+2, \ldots,in)$  for $1 \leq k \leq n$ defines one canonical element.   Moreover, the cycle $\s_{kn}$ cannot be factored into fewer $(k+1)$-cycles, since  replacing each factor with its lower expansion would then yield a factorization into fewer than $kn$ transpositions. As such we call the elements of $\F_n^k$  \emph{minimal $k$-factorizations} of $\s_{kn}$, or simply \emph{$k$-factorizations} for short.   

Minimal $k$-factorizations of full cycles are well-studied and, unsurprisingly,  they  correspond with a class of decorated forests. These will be defined shortly,
but in preparation for stating our main result, we  first describe how the area/coarea statistics on $\F_n$ can be extended to $\F_n^k$. 

 Let
\begin{equation}\label{eq:factform}
	f = \genkfact{k}{n}
\end{equation}
be a generic element of $\F_n^k$, keeping in mind our convention that $a_i^0$ is the least element of the $i$-th factor. Then we define
\begin{equation*}\label{def:areas}
	\begin{split}
	\arealk(f) &:=  \binom{kn}{2} - n\binom{k}{2}- k \sum_{j=1}^n a^0_j,\\
	 \areauk(f) &:= k \sum_{j=1}^n (a^{k}_j -
	1) - \binom{kn}{2} - n\binom{k}{2}.
\end{split}
\end{equation*}
The careful reader may observe that  these expressions share a  common factor of $k$, \emph{e.g.} $\arealk(f)=k\cdot (k\binom{n}{2}-\sum_j a_j^0)$.  This apparent redundancy will arise naturally in our analysis so we have chosen not to ``normalize'' it out of our definitions.  

 We further introduce  two additional statistics on $\F_n^k$ that we call  \emph{semiarea} and
\emph{cosemiarea}, given by 
\begin{equation*}\label{eq:defsemis}
	\begin{split}
	\sarealk(f) &:= \binom{kn}{2} - \sum_{i=0}^{k-1} \sum_{j=1}^n
	a_j^i,\\
	 \sareauk(f) &:= \sum_{i=1}^{k}
	\sum_{j=1}^n (a_j^i - 1) - \binom{kn}{2}.
\end{split}
\end{equation*}
Although not obvious from these definitions, it transpires that $\sarealk(f)$ and $\sareauk(f)$ are also always  divisible by $k$.
Note that {both} $\arealk(f)$ and $\sarealk(f)$ revert to $\areal(f)$ at $k=1$, but are otherwise distinct.  The same is true for coareas.


\subsection{Rooted $k$-Forests} \label{sec:kforests}
A \emph{$k$-forest}  is a rooted labelled forest $F$ equipped with a function $\col
: E(F) \rightarrow [k-1]$ that assigns one of $k$  \emph{colours} $\{0,1,\ldots,k-1\}$ to each edge of $F$. Two $k$-forests $(F,\col)$ and $(F',\col')$ are isomorphic if there is a graph isomorphism $\zeta\,:\,V(F)\rightarrow V(F')$ that preserves roots  and edge colours.

  Let $\R_n^k$  be the set of  $k$-forests on  vertices $[1,n]$.   Note that there are no  restrictions on the colouring function, so there are $k^{|E(F)|}=k^{n-c}$ elements of $\R_n^k$ arising from any  forest $F \in \R_n$ with $c$ components.   For brevity we will suppress explicit mention of the colouring function when working with elements of $\R_n^k$.

Our definition of $k$-forests follows Yan~\cite{yank}, who attributes them to Stanley. These objects appear elsewhere in the literature in the different but equivalent form of \emph{$k$-cacti}, which are tree-like structures with edges replaced by $(k+1)$-gons.  
In particular, there is a simple correspondence between $k$-forests on $n$ labelled vertices and $k$-cacti with $n$ labelled polygons.   See Figure~\ref{fig:kforests}.
\begin{figure}[tbp]
	\centering
	\includegraphics[width=12cm]{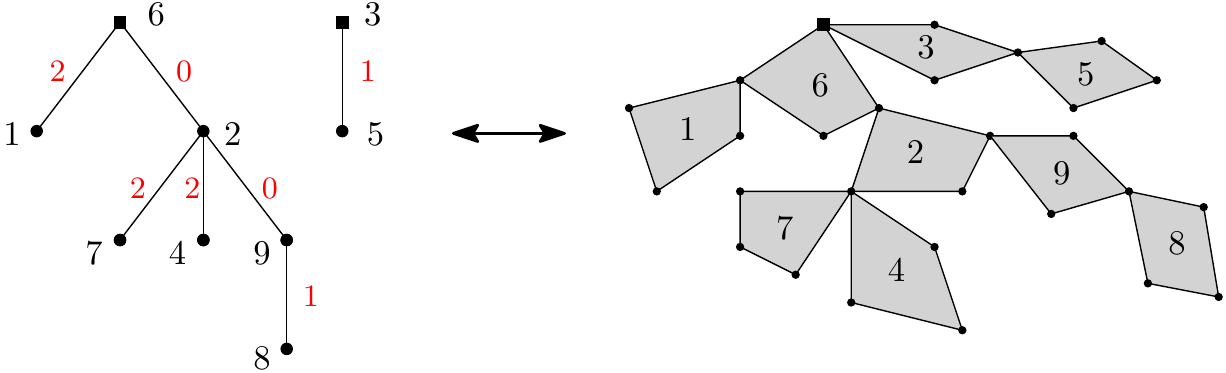}
	\caption{A $3$-forest $F$ and its associated $3$-cactus $C$.  Vertices of of $F$ correspond with polygons of $C$. The colouring of $F$ is indicated in red. The colour  of edge $(i,j)$ describes where $P_j$ (polygon $j$) is attached to $P_i$ in $C$. For example, edge $(9,8)$ with colour 1 indicates $P_8$ is attached to $P_9$ at the second vertex clockwise from where $P_9$ is attached to its parent, $P_2$.}
	\label{fig:kforests}
\end{figure}

A straightforward extension of the Pr\"ufer encoding yields  $|\R_n^k|=(kn+1)^{n-1}$, and the Moszkowski correspondence $\F_n \leftrightarrow \R_n$ likewise extends to a bijection between $\F_n^k$ and $\R_n^k$. In particular, we have $|\F_n^k| =(kn+1)^{n-1}$.  We direct the reader to \cite{irving1} and  references therein for  further information, however all details relevant to our discussion will be outlined when required.  

The Mahonian statistics $\inv/\ninv$ and $\majl/\majr$ on $\R_n$ can be applied to $\R_n^k$ simply by ignoring  edge colourings. However, we also wish to introduce extended versions of these statistics specifically for $k$-forests.

To this end we first define the
\emph{chromatic depth} and \emph{chromatic codepth} of $F \in \R_n^k$ by
\begin{equation}
	\chr_k(F) = \sum_{e=(u,v) \in E(F)} \col(e) \cdot
	h(v)\;\;\; \textnormal{and}\;\;\; \cochr_k(F) = \sum_{e=(u,v) \in E(F)} 
(k-1 -
	\col(e)) \cdot
	h(v),
	\label{eq:cdepth} 
\end{equation}
respectively. Note that 
$\chr_k(F) = \depth(F)$ when $F$ has all its edges coloured 1, so  $\chr_k(F)$ 
can be regarded as a colour-weighted depth of $F$.  We then let
\begin{equation*}
\begin{split}
		\invk(F) &:= \inv(F) + \chr_k(F),\\
		\ninvk(F) &:= \ninv(F) + \cochr_k(F),
\end{split}
\end{equation*}
and
\begin{equation*}
\begin{split}
	\majlk(F) &:= \majl(F) + \chr_k(F), \\
	 \majrk(F) &:= \majr(F) + \cochr_k(F).
\end{split}
\end{equation*}
These extended statistics revert to $\inv/\majl$ at $k=1$ since $\chr_1(F)=\cochr_1(F)=0$ for all  $F \in \R_n$.

This definition of $\invk(F)$ is equivalent to that appearing in \cite{yank}, 
where it arose in an effort to  generalize Kreweras' identity  between the 
inversion enumerator for trees and the discrepancy enumerator for parking 
functions (see \cite{kreweras}).  This connection will be discussed in more 
detail in Section~\ref{sec:parks}. Consideration of $\majlk(F)$ appears to be 
novel.   It is straightforward (using Figure \ref{fig:kforests} as a guide) to
adapt the definitions of $\invk, \ninvk, \majlk,$ and $\majrk$ so they apply to $k$-cacti.  

\subsection{Main Result}
\label{sec:main}

We can now state our main theorem, which relates area statistics on $k$-factorizations to  major
indices on $k$-forests.  
\exclude{
\begin{theorem}
	Let $n$ and $k$ be positive integers.  Then the map
	$\jcdabmap: \F_n^k \rightarrow \R_n^k$ is a bijection.
	Furthermore, if $f \in \F_n^k$ and $F = \jcdab{f}$,  then
	\begin{align}
	\begin{split}
		\arealk(f) &= k \cdot \majlk(F),\\
		\sareauk(f) &= k \cdot \majr(F),
	\end{split}
\\
	\begin{split}
		\sarealk(f) &= k \cdot \majl(F),\\
		\textnormal{ and }\,\,\,\, \areauk(f) &= k \cdot \majrk(F).
	\end{split}
	\end{align}
	\label{thm:maintheorem}
\end{theorem}
}
\begin{theorem}
	\label{thm:maintheorem}
	For any  $n,k \geq 1$, there is an explicit bijection
	$\phi: \F_n^k \rightarrow \R_n^k$ such that for $F=\phi(f)$ we have
		\begin{xalignat*}{2}
		\arealk(f) &= k \cdot \majlk(F), & 
		\areauk(f) &= k \cdot \majrk(F) \\
		\intertext{and}	
		\sarealk(f) &= k \cdot \majl(F), &
		\sareauk(f) &= k \cdot \majr(F). 
	\end{xalignat*}
	Moreover, if $f=\genkfact{k}{n}$ then $a_i^k-a_i^0=k\cdot h(i)$ for $i \in [1,n]$, where $h(i)$ is the  hook length at vertex $i$ in $F$.\end{theorem}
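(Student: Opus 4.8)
The plan is to reduce to the base case $k=1$ of Section~\ref{sec:kone} by expanding each cyclic factor into transpositions. Given $f=\genkfact{k}{n}\in\F_n^k$, replace the factor $\tfact_i$ by its lower decomposition \eqref{eq:expdecomp}, producing the block of $k$ transpositions $(a_i^0\,a_i^1)(a_i^0\,a_i^2)\cdots(a_i^0\,a_i^k)$; concatenating the $n$ blocks yields $\expfunc{f}\in\F_{kn}$, a minimal factorization of $\s_{kn}$. First I would check that $\expfuncmap$ is injective and characterize its image: an element of $\F_{kn}$, cut into $n$ consecutive blocks $B_1,\dots,B_n$ of $k$ transpositions each, lies in the image iff every $B_i$ has the form $(c\,d_1)(c\,d_2)\cdots(c\,d_k)$ for a single first entry $c$ strictly below the (automatically distinct) entries $d_1,\dots,d_k$, the inverse being $B_i\mapsto(c\,d_1\,\cdots\,d_k)$. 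I would also invoke here the structural fact (provable by induction on $n$, classical for minimal factorizations of the long cycle) that the entries of each factor of an element of $\F_n^k$ occur in increasing order, $a_i^0<a_i^1<\cdots<a_i^k$, so that the $k$ arches making up $B_i$ form a nested family with common least endpoint.

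Next, apply the direct bijection $\cdamap\colon\F_{kn}\to\R_{kn}$ of Section~\ref{sec:kone}, and set $\hat F:=\cdamap(\expfunc{f})$. Two properties of $\cdamap$ are needed: it sends $(\areal,\areau)$ to $(\majl,\majr)$, and it sends the span $b_v-a_v$ of the $v$-th transposition to the hook length of vertex $v$ in $\hat F$. Group the vertices of $\hat F$ into the label intervals $B_i=[k(i-1)+1,\,ki]$ inherited from the blocks. The central structural step is to prove that the nested block $B_i$ is carried by $\cdamap$ onto a directed path $ki\to ki-1\to\cdots\to k(i-1)+1$ of $\hat F$, attached to the rest of $\hat F$ only at its top $ki$ --- the vertex of the last transposition $(a_i^0\,a_i^k)$ --- and that the $\hat F$-hook of $ki$ is precisely the union of the blocks $B_j$ for which $j$ is a descendant of $i$ in the resulting quotient forest (so this hook has exactly $k\,h(i)$ vertices, with $h(i)$ the quotient hook length). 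Contracting each $B_i$ to a vertex $i$, orienting edges as in the quotient, and colouring the edge into $i$ by the position (counted from the top, an element of $[k-1]$) of the parent-path vertex to which $B_i$ attaches, defines $\joinmap$ carrying the image of $\cdamap\circ\expfuncmap$ onto $\R_n^k$; one verifies it is a bijection by exhibiting the inverse, which re-expands each coloured vertex into the corresponding $k$-vertex path. Taking $\phi:=\jcdabmap=\joinmap\circ\cdamap\circ\expfuncmap$ gives the bijection, and the ``moreover'' clause is then immediate: the top of $B_i$ is the vertex of $(a_i^0\,a_i^k)$, so $a_i^k-a_i^0$ equals its span, which is its $\hat F$-hook length $k\,h(i)$.

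The four statistic identities reduce to bookkeeping on $\hat F$. Because the lower decomposition repeats $a_i^0$ as the first entry of every transposition in $B_i$, one has $\areal(\expfunc{f})=\binom{kn}{2}-k\sum_i a_i^0$, so $\arealk(f)=\areal(\expfunc{f})-n\binom{k}{2}$, while $\areau(\expfunc{f})=\sareauk(f)$ identically. It remains to compare $\majl(\hat F)=\areal(\expfunc{f})$ and $\majr(\hat F)=\areau(\expfunc{f})$ with the forest statistics of $F=\phi(f)$. Splitting the edges of $\hat F$: the inter-block edges correspond to $E(F)$, and since the top label $ki$ of $B_i$ lies below every label of an ancestor block exactly when the matching edge of $F$ is an inversion, they contribute $k\majl(F)$ to $\majl(\hat F)$ and $k\majr(F)$ to $\majr(\hat F)$; the intra-block edges of $B_i$ all point from a larger to a smaller label, hence contribute nothing to $\majr(\hat F)$, while summing their hook lengths via the structural description of the block gives the intra-block contribution $n\binom{k}{2}+k\chr_k(F)$ to $\majl(\hat F)$. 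Thus $\majl(\hat F)=k\majl(F)+n\binom{k}{2}+k\chr_k(F)$ and $\majr(\hat F)=k\majr(F)$, from which $\arealk(f)=k\majlk(F)$ and $\sareauk(f)=k\majr(F)$ follow at once, and $\sarealk(f)=k\majl(F)$ follows after identifying $\sarealk(f)$ with the inter-block part $\majl(\hat F)-n\binom{k}{2}-k\chr_k(F)$ (using $\sum_{l=1}^{k-1}(a_i^l-a_i^0)=\sum_{l=1}^{k-1}h_{\hat F}(k(i-1)+l)$). Finally $\areauk(f)=k\majrk(F)$ drops out by combining the defining formula for $\areauk(f)$ with the ``moreover'' identity $a_i^k-a_i^0=k\,h(i)$ and the relation $\cochr_k(F)=(k-1)\depth(F)-\chr_k(F)$.

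The genuinely nontrivial part is the structural claim of the second paragraph --- that $\cdamap$ transports each nested block onto a path with the stated hook and attachment behaviour. This does not follow from the interface properties of $\cdamap$ alone, and must be argued from its explicit (recursive) construction in Section~\ref{sec:kone}, by tracking how a nested family of transpositions sharing a least endpoint is inserted. Once that is in place, the image characterization of $\expfuncmap$, the inverse of $\joinmap$, and the four identities above are all routine.
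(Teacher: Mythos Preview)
Your construction is exactly the paper's bijection $\jcdabmap=\joinmap\circ\cdamap\circ\brkmap$ (you call the first map $\expfuncmap$; in the paper that name is reserved for the parking-function analogue and the factorization expansion is $\brkmap$). The characterization of its image in $\R_{kn}$ as those forests in which $j_i$ is a child of $j_{i+1}$ for all $i,j$ is the paper's \eqref{eq:defbarroot}, and the ``nontrivial'' structural claim you flag is precisely Lemma~\ref{lem:helpful}: since consecutive transpositions $(a_j^0\,a_j^i),(a_j^0\,a_j^{i+1})$ in $\barf$ share their least entry, the lemma forces $j_i$ to be a child of $j_{i+1}$ in $\cda{\barf}$. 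This is a direct geometric fact about arch diagrams, not something requiring a recursive unwinding of $\cdamap$ (which is defined non-recursively here). Your phrase ``attached to the rest of $\hat F$ only at its top'' is slightly loose---subtrees may hang off any $j_l$---but your subsequent colouring rule shows you mean ``attached to its ancestors only at $j_k$'', which is correct.

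Where you genuinely diverge from the paper is in deriving the second pair of identities. The paper proves $\areauk(f)=k\majrk(F)$ and $\sarealk(f)=k\majl(F)$ by rerunning the whole construction with the \emph{upper} decomposition $\hatbrkmap$ in place of $\brkmap$ (Proposition~\ref{prop:mainB}), verifying that $\joinmap\circ\cdamap\circ\hatbrkmap$ coincides with $\jcdabmap$. You instead stay with the lower decomposition and obtain these identities algebraically: from $a_j^k-a_j^0=k\,h(j)$ one gets $\arealk(f)+\areauk(f)=k^2\depth(F)=k\majlk(F)+k\majrk(F)$, and similarly $\sarealk(f)+\sareauk(f)=k\depth(F)=k\majl(F)+k\majr(F)$, so the second pair follows from the first. (Your stated route to $\sarealk=k\majl$ via the intra-block hook sum $\sum_{i<k}h_{\hat F}(j_i)=\binom{k}{2}+k\sum_{r}\col(j,r)h(r)$ is also valid and in fact recovers exactly the intra-block contribution $n\binom{k}{2}+k\chr_k(F)$.) This shortcut is correct and more economical than the paper's second pass; what the paper's approach buys in exchange is the observation that $\jcdabmap$ is insensitive to whether one expands with lower or upper decompositions. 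Your edge-by-edge computation of $\majl(\barF)$ and $\majr(\barF)$ is likewise a cleaner substitute for the paper's weighted-forest collapsing argument (Lemma~\ref{thm:majrel}), yielding the same identities \eqref{eq:lemmaid}.
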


Theorem~\ref{thm:maintheorem} will be proved first for $k=1$ in Section~\ref{sec:kone} and then in general in Section~\ref{sec:genk}. In each case the proof is through  construction of the promised bijection $\phi$. 
Note that the theorem implies  $\sarealk(f)$ and $\sareauk(f)$ are independent of the edge colouring of  $F=\phi(f)$. The edge colours of $F$ are only relevant to the evaluation of $\arealk(f)$ and $\areauk(f)$.

We will now describe precisely how Theorem~\ref{thm:maintheorem} can be viewed 
as a generalization of Theorem~\ref{thm:oldirvrat}.  The key is that $\majl(F)$ 
and $\inv(F)$ are equidistributed not only over $\R_n$, but over every 
isomorphism class thereof. This was first proved inductively by Bj\"orner and 
Wachs~\cite{bjorn}, and later bijectively by Liang and Wachs~\cite{liangwachs}.  
More recently, Grady and Poznanovi\'{c} \cite{grady-poz}  established this 
result by mapping both $\inv(F)$ and $\majl(F)$ to a common  code  called a 
\emph{subexcedant sequence} on $F$.  We will not go into further detail here. 
The salient point is that there are known bijections  $\xi\,:\,\R_n \rightarrow 
\R_n$  satisfying $\majl(F)=\inv(\xi(F))$ with $F$ isomorphic to $\xi(F)$ 
for all $F \in \R_n$.  For definiteness, let $\gp$ be the  Grady-Poznanovi\'{c} 
bijection of this type.  

Note that  $\gp$ extends to a bijection on $\R_n^k$ by effectively ignoring edge colours.
\exclude{That is, $(F,\col) \mapsto (\gp(F),\col \circ \gp)$.}  
Let $F \in \R_n^k$ and $F'=\gp(F)$, so that $F$ and $F'$ are isomorphic as $k$-forests and $\majl(F)=\inv(F')$. Then
\begin{align*}
\depth(F)=\depth(F') 
&\implies \majl(F) + \majr(F) = \inv(F')+\ninv(F')  \\
&\implies \majr(F)=\ninv(F').
\end{align*}
It is clear from~\eqref{eq:cdepth} that chromatic depth and codepth are invariant on isomorphism classes of $\R_n^k$, so there follows $\majlk(F)=\invk(F')$ and $\majrk(F)=\ninvk(F')$.

Let $\phi: \F_n^k \rightarrow \R_n^k$ be the bijection guaranteed by Theorem~\ref{thm:maintheorem}.  Then taking $\hat{\phi}=\gp \circ \phi$ proves the following generalization of Theorem~\ref{thm:oldirvrat}.   We shall see in the next section how this sheds light on an open question concerning the relationship between $k$-forests and generalized parking functions.

\begin{corollary}
\label{cor:maincor}
	For any $n,k \geq 1$, there is an explicit bijection $\hat{\phi}: \F_n^k \rightarrow \R_n^k$ such that for $F=\hat{\phi}(f)$ we have $(\arealk(f),\areauk(f)) = k \cdot (\invk(F), \ninvk(F))$ and $(\sarealk(f),\sareauk(f)) = k \cdot (\inv(F), \ninv(F))$.
	\label{thm:maincor}
\end{corollary}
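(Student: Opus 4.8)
The plan is to assemble the corollary directly from Theorem~\ref{thm:maintheorem} together with the properties of the Grady--Poznanovi\'c bijection $\gp$ recalled in the preceding discussion. First I would invoke Theorem~\ref{thm:maintheorem} to obtain the bijection $\phi:\F_n^k\to\R_n^k$ satisfying $\arealk(f)=k\cdot\majlk(F)$, $\areauk(f)=k\cdot\majrk(F)$, $\sarealk(f)=k\cdot\majl(F)$ and $\sareauk(f)=k\cdot\majr(F)$ for $F=\phi(f)$. Then I would set $\hat\phi=\gp\circ\phi$, which is a composition of bijections and hence a bijection $\F_n^k\to\R_n^k$. The only thing to check is that $\gp$ transports the four target statistics on $F$ to the corresponding statistics on $F'=\gp(F)$; everything else is immediate.

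The heart of the argument is therefore the chain of implications already sketched just before the corollary statement: since $F$ and $F'=\gp(F)$ are isomorphic as $k$-forests, they have the same underlying rooted forest, so $\depth(F)=\depth(F')$; the defining property of $\gp$ gives $\majl(F)=\inv(F')$; subtracting from the common depth gives $\majr(F)=\ninv(F')$; and because chromatic depth and codepth depend only on the isomorphism class of a $k$-forest (clear from~\eqref{eq:cdepth}, as the hook lengths $h(v)$ and the colour multiset on each hook are isomorphism invariants), we have $\chr_k(F)=\chr_k(F')$ and $\cochr_k(F)=\cochr_k(F')$. Adding these to the previous two identities yields $\majlk(F)=\invk(F')$ and $\majrk(F)=\ninvk(F')$. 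I would then simply combine these with the Theorem~\ref{thm:maintheorem} identities: for any $f\in\F_n^k$ with $F'=\hat\phi(f)=\gp(\phi(f))$ and $F=\phi(f)$, we get $\arealk(f)=k\cdot\majlk(F)=k\cdot\invk(F')$, $\areauk(f)=k\cdot\majrk(F)=k\cdot\ninvk(F')$, $\sarealk(f)=k\cdot\majl(F)=k\cdot\inv(F')$ and $\sareauk(f)=k\cdot\majr(F)=k\cdot\ninv(F')$, which is exactly the claimed pair of vector identities $(\arealk(f),\areauk(f))=k\cdot(\invk(F'),\ninvk(F'))$ and $(\sarealk(f),\sareauk(f))=k\cdot(\inv(F'),\ninv(F'))$.

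There is essentially no obstacle here: the corollary is a formal consequence of the main theorem and of externally cited facts about $\gp$, and the argument is already spelled out in the text preceding the statement. If I wanted to make the write-up fully self-contained I would add one sentence justifying that $\gp$ acting on $\R_n^k$ really does produce an isomorphic $k$-forest --- namely that $\gp$ is defined on the underlying forest and the edge colouring is carried along the induced bijection of edges --- but this is routine. The only mildly delicate point worth stating explicitly is that $\majl$ and $\inv$ are \emph{not} jointly symmetric with $\majr$, $\ninv$, so the two halves of the corollary are genuinely separate assertions, each requiring the depth-complementation step; both, however, follow from the single identity $\majl(F)=\inv(\gp(F))$ exactly as above.
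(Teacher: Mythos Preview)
Your proposal is correct and follows essentially the same approach as the paper: define $\hat\phi=\gp\circ\phi$, use the defining property $\majl(F)=\inv(\gp(F))$ together with equality of depth to obtain $\majr(F)=\ninv(\gp(F))$, and then invoke isomorphism invariance of $\chr_k$ and $\cochr_k$ to lift these to $\majlk$ and $\majrk$. This is precisely the argument the paper gives in the paragraphs immediately preceding the corollary.
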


Let us now consider the latter claim of Theorem~\ref{thm:maintheorem}, regarding hook lengths. In case $k=1$ the theorem stipulates that the distribution of the hook length vector $(h(1),\ldots,h(n))$ over $F \in \R_n$ matches that of the \emph{difference index} $(b_1-a_1,\cdots,b_n-a_n)$ over $f =(a_1\,b_1)\cdots(a_n\,b_n) \in \F_n$.  A similar result appears in~\cite{gouldenyong}, although there the authors compute transposition differences circularly, replacing $b-a$ with $\min\{b-a, n+1+a-b\}$.  This has the effect of disguising the connection with hook lengths, despite the use of a dual construction equivalent to that used here (see Section~\ref{sec:dual}).

Over the past couple decades, a considerable amount of effort has been put toward the development of  \emph{hook length formulae} for trees and forests.  These formulae  generally provide simple multiplicative expressions for sums of the form $\sum_{T \in \mathcal{T}} \prod_{v \in V(T)} \alpha(v)$, where $\mathcal{T}$ is a class of rooted trees and $\alpha(v)$ can be expressed in terms of the hook length of $T$ at $v$.  

\newcommand{\Aut}[1]{\mathrm{Aut}(#1)}
For the class of rooted labelled forests, one of the simplest such formulae is 
\begin{equation}
\label{eq:hookformula}
	\sum_{F \in \R_n} z^{c(F)} \prod_{v \in V(F)} \frac{1}{h(v)} = z(z+1)(z+2)\cdots(z+n-1),
\end{equation}
where $c(F)$ is the number of components in $F$.
This reflects the fact that permutations on $[1,n]$ with $m$ cycles --- which are well known to be counted by the coefficient of $z^m$ on the right-hand side, \emph{i.e.} the signless Stirling number $s(n,m)$ --- are in correspondence with increasing rooted forests on  $[1,n]$ with $m$ components.  See~\cite{gesselseo} for a more general approach, in particular Corollary 6.3, of which~\eqref{eq:hookformula} is a special case.

Using~\eqref{eq:hookformula} at $z=1$ together with Theorem~\ref{thm:maintheorem} at $k=1$ yields the curious identity
$$	
\sum_{f \in \F_n} \prod_{i=1}^n \frac{1}{b_i-a_i} = n!,
$$
where the sum extends over all factorizations $f =(a_1\,b_1)\cdots(a_{n}\,b_{n}) \in \F_n$.   More generally, since there are $k^{n-c(F)}$ ways of colouring a forest $F \in \R_n$ to create a $k$-forest $F' \in \R_n^k$, we can apply~\eqref{eq:hookformula} at $z=1/k$ to get:

\begin{corollary}
For any $n,k \geq 1$, we have
\begin{align*}
\sum_{f \in \F_n^k} \prod_{i=1}^n \frac{1}{a_i^k-a_i^0} 
	= 	\frac{(k+1)(2k+1)\cdots((n-1)k+1)}{k^n},
\end{align*}
where the sum extends over all  $f =\genkfact{k}{n} \in \F_n^k$.
\end{corollary}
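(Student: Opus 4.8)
The plan is to derive this corollary as an immediate consequence of Theorem~\ref{thm:maintheorem} together with the hook length formula~\eqref{eq:hookformula}. By the final clause of Theorem~\ref{thm:maintheorem}, the bijection $\phi: \F_n^k \to \R_n^k$ satisfies $a_i^k - a_i^0 = k\cdot h(i)$ for each $i \in [1,n]$, where $h(i)$ is the hook length at vertex $i$ in $F = \phi(f)$. Hence, summing over $\F_n^k$ is the same as summing over $\R_n^k$, and the product transforms as
\begin{align*}
\sum_{f \in \F_n^k} \prod_{i=1}^n \frac{1}{a_i^k - a_i^0}
= \sum_{F \in \R_n^k} \prod_{i=1}^n \frac{1}{k \cdot h(i)}
= \frac{1}{k^n} \sum_{F \in \R_n^k} \prod_{v \in V(F)} \frac{1}{h(v)}.
\end{align*}

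Next I would reduce the sum over $\R_n^k$ to a sum over $\R_n$. Since the hook lengths $h(v)$ of a $k$-forest depend only on the underlying forest (edge colours are irrelevant to tree structure), and since every forest $F \in \R_n$ with $c(F)$ components gives rise to exactly $k^{n - c(F)} = k^{|E(F)|}$ distinct $k$-forests, we have
\begin{align*}
\sum_{F \in \R_n^k} \prod_{v \in V(F)} \frac{1}{h(v)}
= \sum_{F \in \R_n} k^{n - c(F)} \prod_{v \in V(F)} \frac{1}{h(v)}
= k^n \sum_{F \in \R_n} (1/k)^{c(F)} \prod_{v \in V(F)} \frac{1}{h(v)}.
\end{align*}
Applying~\eqref{eq:hookformula} at $z = 1/k$, the last sum equals $(1/k)(1/k + 1)(1/k + 2)\cdots(1/k + n - 1)$. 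Combining with the factor $k^{-n}$ from the first display, the two powers of $k$ cancel, leaving
\begin{align*}
\sum_{f \in \F_n^k} \prod_{i=1}^n \frac{1}{a_i^k - a_i^0}
= \frac{1}{k} \left(\frac{1}{k}\right)\left(\frac{1}{k} + 1\right)\cdots\left(\frac{1}{k} + n - 1\right) \cdot \frac{k^n}{k^n}.
\end{align*}
Clearing denominators by writing $1/k + j = (jk+1)/k$ for $j = 1, \ldots, n-1$ and $1/k = 1/k$ produces a single factor of $k^{-n}$ in front of the product $1 \cdot (k+1)(2k+1)\cdots((n-1)k+1)$, which is exactly the claimed right-hand side $\frac{(k+1)(2k+1)\cdots((n-1)k+1)}{k^n}$.

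This argument is entirely routine once Theorem~\ref{thm:maintheorem} is in hand; there is no genuine obstacle. The only points requiring a small amount of care are bookkeeping the powers of $k$ — one factor $k^n$ comes from the hook-length rescaling $a_i^k - a_i^0 = k\,h(i)$, another $k^n$ from the colouring multiplicity $k^{n-c(F)}$ aggregated via $z^{c(F)}$ at $z = 1/k$ — and verifying that these combine correctly with the shifted rising factorial $(1/k)(1/k+1)\cdots(1/k+n-1)$ to yield the stated product. One should also note the $z = 1$ case reproduces the identity $\sum_{f \in \F_n} \prod_i (b_i - a_i)^{-1} = n!$ mentioned just before the corollary, providing a sanity check.
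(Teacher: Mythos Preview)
Your proof is correct and follows exactly the approach sketched in the paper: apply the hook-length clause of Theorem~\ref{thm:maintheorem}, pass from $\R_n^k$ to $\R_n$ via the $k^{n-c(F)}$ colouring count, and evaluate~\eqref{eq:hookformula} at $z=1/k$. The only blemish is the intermediate display, where a stray extra factor of $\tfrac{1}{k}$ appears (after the $k^{-n}$ and $k^n$ cancel you should have simply $(1/k)(1/k+1)\cdots(1/k+n-1)$); your surrounding prose and final simplification are nonetheless correct.
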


\subsection{$k$-Parking Functions}\label{sec:parks}

A sequence $p=(a_1, \dots, a_n)$ of
nonnegative integers is called a \emph{$k$-parking function}
if its nondecreasing rearrangement $(b_1, \dots, b_n)$  satisfies $b_i \leq k(i-1)$ for $1 \leq i \leq n$.  Let $\p_n^k$ be the set of $k$-parking functions of length $n$.  
Elements of $\p_n:=\p_n^1$  are simply called \emph{parking
functions}.   There is an extensive body of literature on these object and we will only skim the surface here. We refer the reader to the  comprehensive surveys by Yan \cite{yanhandbook} and Haglund \cite{hag} for further information.

It is well known, and easy to prove via cycle lemma or direct bijection, that $|\p_n^k|=|\R_n^k|=(kn+1)^{n-1}$.  This  can be refined to account for inversions in $k$-forests. The companion statistic  on $k$-parking functions is called \emph{displacement}, defined for $p =(a_1,\ldots,a_n) \in \p_n^k$  by
\begin{equation}
\label{eq:dispdefn}
	\disp_k(p) := k \binom{n}{2} - \sum_{i=1}^n a_i.
\end{equation}
Then we have the following result,  which was first proved for $k=1$ by Kreweras
\cite{kreweras} and then for general $k \geq 1$ by Yan.
\begin{theorem}[\cite{yank}]
	For $n, k \geq 1$ 
		\begin{equation*}\label{eq:yank}
		\sum_{p \in \p_n^k} q^{\disp_k(p)}
=
		\sum_{F \in \R_n^k} q^{\invk(F)}.
	\end{equation*}
	\label{thm:kparkinv}
\end{theorem}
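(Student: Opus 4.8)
The plan is to deduce Theorem~\ref{thm:kparkinv} as a corollary of the bijective machinery already assembled, rather than to reprove Yan's equidistribution from scratch. The starting point is Corollary~\ref{cor:maincor}, which gives an explicit bijection $\hat\phi\colon \F_n^k \to \R_n^k$ with $\arealk(f) = k\cdot\invk(\hat\phi(f))$. Summing over $\F_n^k$ therefore yields
\begin{equation*}
	\sum_{F \in \R_n^k} q^{k\cdot\invk(F)} = \sum_{f \in \F_n^k} q^{\arealk(f)}.
\end{equation*}
So it suffices to produce a bijection (or at least an equidistribution argument) between $\F_n^k$ and $\p_n^k$ carrying $\arealk(f)$ to $k\cdot\disp_k(p)$. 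Recalling $\arealk(f) = k\bigl(k\binom{n}{2} - \sum_j a_j^0\bigr)$ and $\disp_k(p) = k\binom{n}{2} - \sum_i a_i$, what is needed is a bijection $\F_n^k \to \p_n^k$ under which the multiset of smallest-elements $\{a_1^0,\ldots,a_n^0\}$ of the factors is sent (as a multiset, since both sums are symmetric) to the entries of the parking function.

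The first step is thus to construct this bijection $\F_n^k \to \p_n^k$. The natural candidate is the classical correspondence between minimal factorizations of a long cycle and parking functions: reading a factorization $f = (\tfact_1,\ldots,\tfact_n)$, one records for each factor $\tfact_j$ its least moved point $a_j^0$, and shows that the resulting sequence $(a_1^0,\ldots,a_n^0)$ — or a suitable rearrangement thereof indexed consistently — is a $k$-parking function, with every $k$-parking function arising exactly once. For $k=1$ this is a well-known fact (the ``first letters of the factors'' map); for general $k$ it should follow from the same cycle-lemma / transitive-factorization counting that underlies $|\F_n^k| = (kn+1)^{n-1} = |\p_n^k|$, already invoked in Section~\ref{sec:facts}. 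One must check that the constraint ``nondecreasing rearrangement $(b_i)$ satisfies $b_i \le k(i-1)$'' is exactly the image condition; this is the combinatorial heart of the argument. Once that bijection is in hand, composing with $\hat\phi^{-1}$ and matching the two displayed sums completes the proof.

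An alternative, and perhaps cleaner, route avoids parking functions as an intermediate altogether: invoke Yan's Theorem directly as a cited result. But since the excerpt presents Theorem~\ref{thm:kparkinv} with its own statement, the intended proof is presumably the bijective one just sketched, emphasizing that the paper's construction gives a genuinely new (factorization-mediated) proof of Yan's identity — indeed the abstract advertises ``a bijective proof of the equidistribution observed by Yan.'' So I would present the composite bijection $\p_n^k \to \F_n^k \xrightarrow{\hat\phi} \R_n^k$ explicitly and verify the statistic tracking at each stage: $\disp_k(p) \mapsto \tfrac1k\arealk(f) \mapsto \invk(F)$.

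The main obstacle I anticipate is the verification that the ``least-moved-point'' data of a $k$-factorization ranges precisely over $k$-parking functions, i.e.\ establishing the bijection $\F_n^k \leftrightarrow \p_n^k$ with the correct statistic correspondence. The subtlety is that $\arealk$ only sees the multiset $\{a_j^0\}$, whereas a parking function is an ordered sequence and $\disp_k$ is likewise symmetric — so the equidistribution of the two \emph{one-variable} generating functions does not require a statistic-respecting bijection on the nose, only a bijection under which the two symmetric sums agree. That relaxation should make the argument go through via a counting/cycle-lemma proof even if a fully natural order-respecting bijection is awkward to pin down. A secondary point to be careful about: one should confirm that the factor indices can be arranged so that the $j$-th factor's least point respects the parking-function ordering convention, or else argue purely at the level of multisets; I would favour the latter to keep the exposition short.
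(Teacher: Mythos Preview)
Your proposal is correct and follows exactly the paper's route: compose the bijection $\hat{\phi}$ of Corollary~\ref{cor:maincor} with the ``least-element'' map $L\colon f \mapsto (a_1^0,\ldots,a_n^0)$ from $\F_n^k$ to $\p_n^k$, which the paper states as Proposition~\ref{thm:factstopark} and proves (resolving your main anticipated obstacle) by reducing to the Stanley--Biane case $k=1$ via the injections $\brkmap$ and $\expfuncmap$. Your fallback to a multiset-level argument is therefore unnecessary --- $L$ is an honest order-preserving bijection.
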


Yan's proof of Theorem \ref{thm:kparkinv} is inductive, and it has been an open problem to find a bijective proof. 
Such a proof is afforded by Corollary~\ref{cor:maincor} in tandem  with the simple correspondence between $\F_n^k$ and $\p_n^k$  described below.

\begin{proposition}
For fixed $n,k \geq 1$, define $L \,:\, \F_n^k \rightarrow [kn]^n$ by 
$$
L:\genkfact{k}{n} \mapsto (a_1^0, \dots, a_n^0).
$$ 
Then $L$ is a bijection from $\F_n^k$ to $\p_n^k$, and $\area_k(f)=k\cdot \disp_k(L(f))$ for all $f$.
	\label{thm:factstopark}
\end{proposition}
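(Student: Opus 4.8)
The plan is to establish two things independently: first, that the image $L(f)$ always lies in $\p_n^k$; second, that $L$ is a bijection onto this set; and finally to verify the displacement identity, which will be essentially immediate from the definitions. The combinatorial heart is really the bijectivity, for which I would exploit the extended Moszkowski correspondence $\F_n^k \leftrightarrow \R_n^k$ already alluded to in Section~\ref{sec:kforests} rather than attempt a direct argument.

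First I would show $L(f) \in \p_n^k$ for every $f = \genkfact{k}{n} \in \F_n^k$. Write $p = (a_1^0,\ldots,a_n^0)$ and let $(b_1,\ldots,b_n)$ be its nondecreasing rearrangement; I must check $b_i \le k(i-1)$. The idea is to track how many symbols can already be ``active'' after processing a prefix of the factorization. Consider the product $\tfact_1\cdots\tfact_j$ as a permutation: starting from the identity and multiplying in $(k+1)$-cycles, after $j$ factors the support has size at most $kj + 1$ (each new $(k+1)$-cycle can introduce at most $k$ new symbols beyond those already moved, since each factor shares its least element as an anchor when the $\tfact_i$ compose to the long cycle). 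Since $\tfact_1\cdots\tfact_n = \s_{kn}$ must move all of $\{0,1,\ldots,kn\}$, any least-element $a_i^0$ that is large can only appear late in the sequence. More precisely, I would argue that among $a_1^0,\ldots,a_n^0$, at most $j$ of them can be $< $ some threshold governed by which symbols are reachable in $j$ steps; dualizing this gives $b_i \le k(i-1)$. This is the standard ``cycle lemma / prefix'' style argument for parking functions and I expect it to go through cleanly once the bookkeeping on supports is set up correctly.

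For surjectivity and injectivity, rather than reconstruct $f$ from $p$ by hand, I would invoke a counting-plus-injectivity shortcut: we know $|\F_n^k| = (kn+1)^{n-1} = |\p_n^k|$ from the discussion in Section~\ref{sec:kforests}, so it suffices to prove $L$ is injective. For injectivity I would use the fact that a $k$-factorization is determined by more than just the vector of least elements in general --- so the real content is that the least-element data, \emph{together with the constraint that the product equals the long cycle}, pins down each $\tfact_i$. Here I would appeal to the rigidity of minimal (transitive) factorizations: given the multiset of supports' minima and the target $\s_{kn}$, the ``noncrossing'' structure of a minimal factorization forces each factor. Concretely, one shows that the smallest $a_i^0$ corresponds to a factor whose remaining entries are forced by requiring the partial products to stay on track toward $\s_{kn}$, then peels it off and induces. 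The cleanest route may be to compose $L$ with the known bijection $\F_n^k \to \R_n^k$ and identify $L$ as reading off a statistic (a Prüfer-type or subexcedant code) that is already known to be in bijection with $\p_n^k$; I would state this as the spine of the argument and fill in the identification.

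Finally, the displacement identity: by definition $\disp_k(p) = k\binom{n}{2} - \sum_i a_i^0$ for $p = L(f)$, while $\arealk(f) = \binom{kn}{2} - n\binom{k}{2} - k\sum_j a_j^0 = k\big(k\binom{n}{2} - \sum_j a_j^0\big)$ using the identity $\binom{kn}{2} - n\binom{k}{2} = k^2\binom{n}{2}$ noted after the definition of $\arealk$. Hence $\arealk(f) = k \cdot \disp_k(L(f))$ with no further work. The main obstacle is the injectivity/surjectivity of $L$: verifying the parking-function inequalities is routine, and the displacement identity is a one-line algebraic check, but showing that the vector of cycle-minima alone reconstructs the $k$-factorization (given the long-cycle constraint) requires either the structural rigidity of minimal factorizations or a careful transfer through the Moszkowski bijection, and that is where the argument needs the most care.
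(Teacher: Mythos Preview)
Your displacement identity is correct and matches the paper exactly. The overall strategy---show $L(f)\in\p_n^k$, argue injectivity, then invoke $|\F_n^k|=|\p_n^k|$---is sound in outline, but your proposed argument for the parking inequality contains a genuine error. You claim that after multiplying the first $j$ factors the support has size at most $kj+1$, justified by ``each factor shares its least element as an anchor''. This is false: consecutive factors of a minimal $k$-factorization need not overlap at all. Already for $k=1$, the factorization $f=(0\;1)(2\;3)(0\;2)(0\;4)\in\F_4$ has $\tfact_1\tfact_2=(0\;1)(2\;3)$ with support of size $4>2\cdot1+1$. So the prefix-support bound does not hold, and your deduction of $b_i\le k(i-1)$ from it collapses. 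A direct argument can be repaired, but it needs the cactus structure of minimal factorizations rather than a naive support count; your injectivity sketch via ``peel off the smallest $a_i^0$ and induce'' likewise needs that structural input to be made rigorous.

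The paper takes a shorter and quite different route: it reduces everything to the known case $k=1$ (the Stanley--Biane bijection $\sbe:\F_n\to\p_n$). One defines $\brkmap:\F_n^k\to\F_{kn}$ by replacing each $(k{+}1)$-cycle with its lower decomposition~\eqref{eq:expdecomp}, and $\expfuncmap:\p_n^k\to\p_{kn}$ by replacing each entry of a $k$-parking function with $k$ copies of itself. Both maps are obviously injective, and by inspection $\sbe\circ\brkmap(\F_n^k)\subseteq\expfuncmap(\p_n^k)$. Hence $\expfuncmap^{-1}\circ\sbe\circ\brkmap$ is a well-defined injection $\F_n^k\to\p_n^k$ that agrees with $L$ by construction; equality of cardinalities then gives bijectivity. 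This sidesteps both your support argument and your injectivity-by-rigidity argument entirely, with the $k=1$ Stanley--Biane result as the only external input. Your plan to push things through the $\F_n^k\leftrightarrow\R_n^k$ correspondence could ultimately be made to work (the paper notes that a direct generalization of Biane's argument appears in~\cite{williams}), but it is considerably more laborious than the paper's reduction.
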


 Assuming  the truth of the proposition, and letting $\hat{\phi}\,:\,\F_n^k \rightarrow \R_n^k$ be the bijection from Corollary~\ref{cor:maincor}, observe that  $L \circ \hat{\phi}^{-1}$  maps $\R_n^k$ bijectively to $\p_n^k$ while satisfying  $\invk(F)=\frac{1}{k}\cdot \areal_k(\hat{\phi}^{-1}(f))=\disp_k(L(\hat{\phi}^{-1}(F)))$.  This is the promised bijective proof of Theorem~\ref{thm:kparkinv}. 

The latter claim of Proposition~\ref{thm:factstopark} is an immediate 
consequence of the definitions. 
The bijectivity of $\map{L}{\F_n^k}{\p_n^k}$ when $k=1$ was proved
explicitly by Biane~\cite{biane} and is equivalent to an earlier result of Stanley~\cite[Theorem 3.1]{stanleyparking}. We  call this special case of $L$ the \emph{Stanley-Biane bijection}, denoted $\sbe : \F_n \rightarrow \p_n$.  A proof of   bijectivity for general $k$ recently appeared in \cite{williams}, where it was established through a straightforward generalization of Biane's argument.  We will now describe a different (independent) proof that relies on a  reduction to $\mathsc{sb}$.  

Consider the function $\expfuncmap : \p_n^k \rightarrow \p_{kn}$ that replaces each entry of a $k$-parking function with $k$ copies of itself.  For instance, $p=(0,5,1) \in \p_3^3$ has $\expfuncmap(p)=(0,0,0,5,5,5,1,1,1) \in \p_9$. 

Similarly define the function $\brkmap : \F_n^k \rightarrow
\F_{kn}$ that replaces each cyclic factor of a $k$-factorization with its lower decomposition; see~\eqref{eq:expdecomp}. For example, $f=(1\,2\,5)(0\,1\,6)(3\,4\,5) \in \F_3^2$ has $\brkmap(f)=(1\,2)(1\,5)(0\,1)(0\,6)(3\,4)(3\,5) \in \F_{6}$.  

Certainly both $\expfuncmap$ and $\brkmap$ are injective for all $k$ (although not bijective for $k > 1$). It is also clear that $\sbe \circ \brkmap(\F_n^k) \subseteq \expfuncmap(\p_n^k)$.  Thus $\expfuncmap^{-1} \circ \sbe \circ
\brkmap$ is a well-defined injective function from $\F_n^k$ to $\p_n^k$, and by definition it agrees with $L$ on its domain.  Since  $|\F_n^k|=|\p_n^k|$,  we conclude that $L$ is bijective.

Finally, a comment on nomenclature. It is common to view a $k$-parking function $p=(a_1,\ldots,a_n) \in \p_n^k$ as a specially labelled  lattice path  from $(0,0)$ to $(n, kn)$, with unit steps to the north and east, that remains weakly below the line $y=kx$. For each  $h \in [kn-1]$ let $I_h = \{i : a_i = h\}$. Then the path $P$ corresponding to $p$ has $|I_h|$ horizontal steps at height $h$ and these are labelled in increasing left-right order with the set $I_h$.  The displacement $\disp_k(p)$ is then the number of whole squares between $P$
and the line $y=kx$.  For this reason, $\disp_k(p)$ is also known as the \emph{area} of $p$. In light of Proposition~\ref{thm:factstopark}, this explains our naming of the statistics $\arealk/\areauk$ on $\F_n^k$.

\exclude{
It has come to light that while producing this manuscript that
Nadeau \emph{et al} \cite{williams} have also produced a proof of this.  Their proof is
based on Biane's original proof for the case $k=1$, whereas we only rely
on Biane's result.  Important to us, however, is the representation
of a $k$-factorization as labelled forests:  this construction 
follows from the $k=1$ construction given \cite{irvrat} by the present
two authors;  we outline it below in Section \ref{sec:kone}.\attentionpreet{check content
of this paragraph.  I didn't read that paper}

The claim for $\bar{U}$ in Theorem \ref{thm:factstopark} follows from
the claim on $L$:  let $\theta$ be
the map on $[kn]$ that swaps $i$ and $kn -i$; apply $\theta$ to each
factor of $f$ and reverse order of the product;  the product of the
resulting cycles will be $\sigma_{[kn]}$, and its lower series will be the reverse
$\bar{U}(f)$.

We now explain why we use the terminology of area statistics for factorizations.  Let $L$ be the lattice path
beginning at $(0,0)$, then having $k$ steps to the east, followed by $k$ steps to the north,
and then repeating these east and north runs of $k$ steps until finally
terminating at $(kn, kn)$.  Given a factorization as in \eqref{eq:factform}, it can be shown that the
non-decreasing rearrangement $b_1, \ldots, b_n$ of the sequence $a_1^0, \ldots,
a_n^0$ satisfies $b_i \leq k(i-1)$ (see Section \ref{sec:parks} for details on
connections to \emph{$k$-parking functions}).  Therefore, if we draw a lattice
path $P$ from $(0,0)$ to $(kn, kn)$, having north and east steps, with $jk$ steps
east steps at height $i$, where $j = |\{t : a_t^0 = i\}|$, the lattice path lies
below the line $L$ and $\arealk(f)$ is precisely the area between $P$ and $L$.
The quantity $\areauk(f)$ can be interpreted similarly.
\attentionpreet{perhaps elaborate.  we may need an example here}

Similarly, one can make a similar interpretation of the semiareas
\attentionpreet{elaborate.  perhaps example}
}

\section{The construction for $k=1$}\label{sec:kone}

In this section we focus on the case $k=1$ of Theorem
\ref{thm:maintheorem}. The bijection $\map{\phi}{\F_n}{\R_n}$ that we  construct in this case has a particularly simple description and will be central to our analysis for general $k$.
Throughout we shall simplify our notation by omitting the value of $k$, using
$\R_n$ in place of $\R_n^1$, $\areal(f)$ for $\areal_1(f)$, \emph{etc.}  

\subsection{Arch Diagrams}
\label{sec:arch}

Let $\edgetree_n$ be the set of {vertex-rooted, edge-labelled trees} on $n$ edges, by which we mean trees with a distinguished vertex  whose edges are distinctly  labelled with $[1,n]$. Vertices are not labelled.

We typically will not distinguish between an edge and its label; \emph{i.e.} we view $[1,n]$ as the edge-set of any tree $T \in \edgetree_n$.  As with rooted forests, we regard the edges of these trees as being directed away from the root. It will be convenient to let $d(i)$ denote the child (`down') endpoint of  edge $i$.

There is a simple correspondence between $\edgetree_n$ and $\R_n$, defined as 
follows: Given $T \in \edgetree_n$, we first `push' the label of each edge $i$ 
away from the root  onto vertex $d(i)$, and then remove the root to obtain a 
rooted forest $F \in \R_n$. We call this the \emph{Cayley bijection}, written 
$\caymap\,:\,\edgetree_n \rightarrow \R_n$. See Figure \ref{fig:exampcaymapk1}.
\begin{figure}[tbp]
	\centering
	\includegraphics[width=12cm]{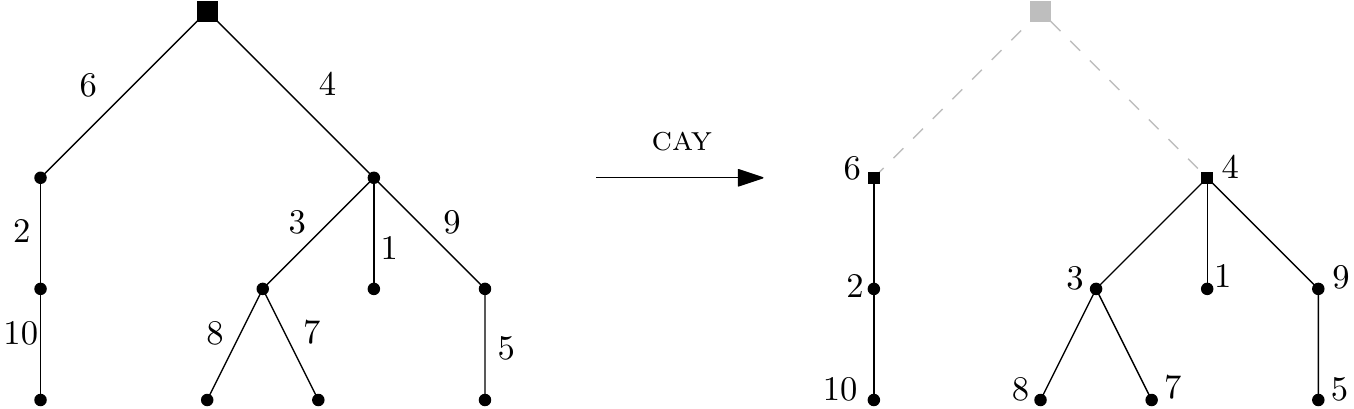}
	\caption{A vertex-rooted, edge-labelled tree 
		$T \in \edgetree_{10}$ (left) and its image $\caymap(T) \in \R_{10}$
	(right).  The boxed vertices indicate  roots.  The major index of both  $T$ and $\caymap(T)$ is 7 and their comajor index is 5.}
	\label{fig:exampcaymapk1}
\end{figure}

\exclude{
The Cayley correspondence guides our definition of hook-related statistics on
edge-labelled trees. Letting $F =\cay{T}$, we set $H(i)$=For $i \in [1,n]$, we define  the hook$H(i) := H(d(i))$ and hook length $h(i) := |H(i)|$, the left/right hooklengths
\begin{equation*}
	\he_L(i) = \sum_{j \in C_L(i)} h(j) \qquad\mathrm{ and }\qquad
	\he_R(i) = \sum_{j \in C_R(i)} h(j)
\end{equation*}
  $h_L(i) :=h_L(d(i))$
$\majl(T):=\sum_i h_L(d(i))$ etc.

\begin{equation*}
	\he_L(i) = \sum_{j \in C_L(i)} h(j) \qquad\mathrm{ and }\qquad
	\he_R(i) = \sum_{j \in C_R(i)} h(j)
\end{equation*}

}

\exclude{
major on edge-labelled trees. For  $T
\in \edgetree_n$ and $i \in [1,n]$ let $C(i)$ be the set of edges
incident with $d(i)$ excluding $i$ itself, and let $C_L(i)$ and
$C_R(i)$  be the subsets of $C(i)$ consisting of edges smaller and 
larger than $i$, respectively.  Then define
\begin{equation*}
	\he_L(i) = \sum_{j \in C_L(i)} h(j) \qquad\mathrm{ and }\qquad
	\he_R(i) = \sum_{j \in C_R(i)} h(j)
\end{equation*}
and
\begin{equation*}\label{eq:defmajedge}
	\majle(T) = \sum_{i \in [1,n]} \he_L(i) \qquad\mathrm{ and }\qquad
	\majre(T) = \sum_{i \in [1,n]} \he_R(i).
\end{equation*}
It is immediate from these definitions that 
\begin{equation}
\label{eq:edgerooted}
\majl(\cay{T}) = \majle(T) \qquad\text{and}\qquad
\majr(\cay{T}) = \majre(T)
\end{equation}
 for $T \in \edgetree_n$, as illustrated in Figure \ref{fig:exampcaymapk1}.

In what follows we will work extensively with  edge-labelled trees and ultimately rely on the Cayley bijection to translate back to the language of rooted forests. 
}

Our interest in edge-labelled trees stems from the fact that they are in natural correspondence with factorizations of full cycles.   

Consider a planar embedding of $T \in \edgetree_n$ described as follows:
\begin{enumerate}
	\item  the root is at $(0,0)$, and  all other vertices at $(i,0)$ for  $1 \leq i \leq n$;
	\item   edges are labelled $[1,n]$ and are drawn above the $x$-axis  without crossings;
		and
	\item   the sequence
		of edge labels around each vertex, taken in counterclockwise order
		beginning on the $x$-axis, is increasing.
\end{enumerate}
Following~\cite{irvrat}, we call an embedding of a member of $\edgetree_n$ satisfying (1)--(3) an \emph{arch diagram} of size $n$. Let $\archset{n}$ be set of all such diagrams up to topological equivalence. The embedding process described above provides a one-one correspondence $\edgetree_n \leftrightarrow \archset{n}$.

We  canonically label the vertices of each diagram $A \in \archset{n}$ by assigning label $i$ to the vertex at $(i, 0)$, for $0 \leq i \leq n$.  We emphasize that 
these labels are completely determined by $A$ itself, or equally by its skeletal tree $T \in \edgetree_n$. We then obtain from  $A$ a  factorization $f=(a_1\,b_1)(a_2\,b_2)\cdots(a_n\,b_n) \in
\F_n$ by letting $a_i$ and $b_i$ be the endpoints of  arch  $i$.  
See Figure~\ref{fig:exampk1}. 
\begin{figure}[tbp]
	\centering
	\includegraphics[width=.7\textwidth]{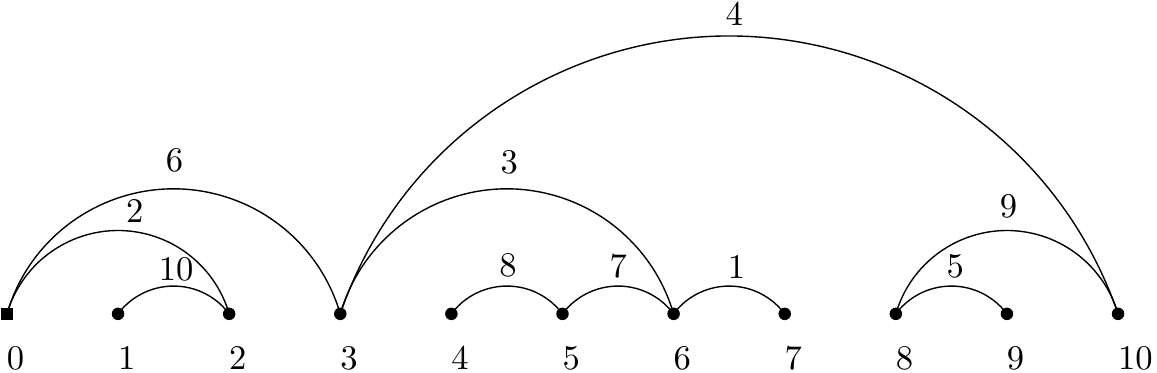}
	\caption{The arch diagram $A \in \archset{10}$ corresponding to the factorization $f = (6\, 7)(0\,
	2) (3\, 6) (3\, 10) (8\, 9) (0\, 3) (5\, 6) (4\, 5) (8\, 10)
	(1\, 2) \in \F_{10}$. Vertices are canonically  labelled.}
	\label{fig:exampk1}
\end{figure}

The transformation $A \mapsto f$ turns out to be a bijection from $\archset{n}$ to $\F_n$. We denote this function by $\map{\factmap}{\archset{n}}{\F_n}$
and its inverse by $\map{\archmap}{\F_n}{\archset{n}}$. The direct definition of $\archmap$ is obvious:  Given  $f=(a_1\,b_1)\cdots(a_n\,b_n) \in \F_n$, construct $\archmap(f)$ by drawing an arch labelled $i$ from  $(a_i, 0)$ to $(b_i, 0)$  for each $1 \leq i \leq n$.    See~\cite{irvrat} for a more detailed description of these transformations  based on the `circle-chord' construction from~\cite{gouldenyong}. The composite mapping $\factmap \circ \caymap^{-1}$  is essentially a repackaging of Moszkowski's bijection $\R_n \rightarrow \F_n$ referenced in Section~\ref{sec:factorizations}.

Let $A$ be a fixed arch diagram.  We write $l(i)$ and $r(i)$, respectively, for the left and right endpoints of edge $i$ in $A$, and we define its \emph{span}   to be the half open interval $\spann(i) := \{(x, 0) : x \in [l(i), r(i))\}$.   Since distinct edges cannot cross, their spans are either disjoint or one is contained in the other. Thus the edges of $A$ are partially ordered by inclusion of their spans.  We say $j$ \emph{covers} $i$ if $\spann(i) \subset \spann(j)$ and  there is no  arch $\ell$ with $\spann(i) \subsetneq \spann(\ell) \subsetneq \spann(j)$.

\subsection{Dual Diagrams}\label{sec:dual}

An arch diagram $A \in \archset{n}$ divides the upper half-plane $\{(x,y) \,:\, y \geq 0\}$ into $n+1$ regions, each of which contains exactly one point from the set
$H_n = \{(i + \frac{1}{2}, 0) : 0 \leq i \leq n\}$. Each arch separates two 
regions and hence two points of $H_n$.  We  construct a planar dual $D$ of $A$  
by first placing a vertex at each point of $H_n$ and then, for each $i \in 
[1,n]$,  drawing an arch labelled $\bar{i}$ between the two points of $H_n$ that 
are separated by arch $i$ of $A$.  See Figure~\ref{fig:exampdualk1}.   Note our 
use of overlined symbols $\bar{1},\bar{2},\ldots,\bar{n}$ for edge labels in the 
dual. This notational convention will be used systematically  to distinguish 
labels in $D$ from those in $A$.

So constructed, the dual of any arch diagram $A \in \archset{n}$ is an embedding of a tree $T \in \edgetree_n$ satisfying the following:
\begin{enumerate}
	\item[(1')] vertices lie at the points $H_n$, with the root at $(n+\frac{1}{2},0)$;
	\item[(2')] edges are labelled $[\bar{1},\bar{n}]$ and are drawn above the $x$-axis  without crossings;
		and
	\item[(3')]  the sequence
		of edge labels around each vertex, taken in counterclockwise order
		beginning on the $x$-axis, is \emph{decreasing}.
\end{enumerate}
Let $\dualset{n}$  be set of topologically inequivalent embeddings satisfying
$(1') - (3')$. Clearly the map
$\map{\dualmap}{\archset{n}}{\dualset{n}}$ described above is a
bijection.  We call elements of $\dualset{n}$  \emph{dual  diagrams} of
size $n$ and canonically label their  vertices by assigning label $i$ to the vertex at  $(i +
\tfrac{1}{2}, 0)$ for $0 \leq i \leq n$. 
\begin{figure}[tbp]
	\centering
	\includegraphics[width=.8\textwidth]{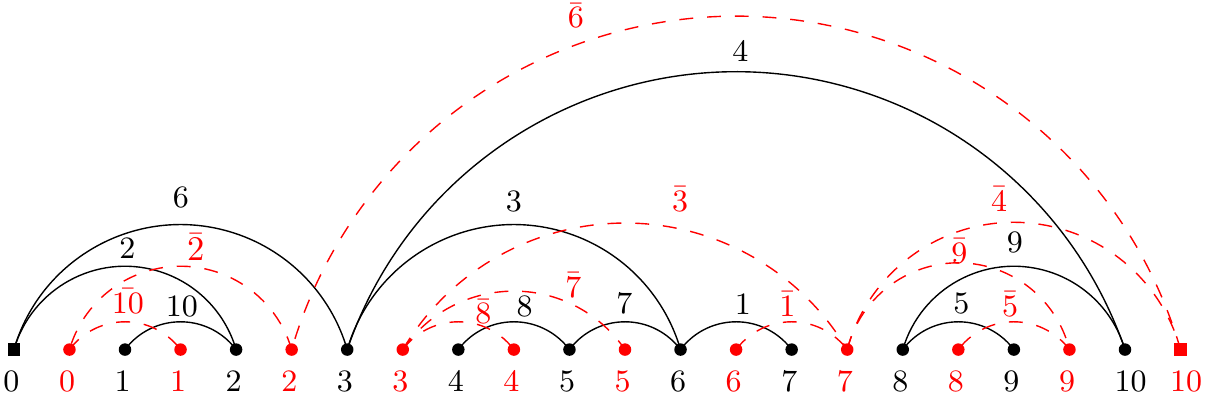}
	\caption{The arch diagram $A \in \archset{10}$ of Figure \ref{fig:exampk1} in drawn in black and its dual $D \in \dualset{10}$ in dashed red. The skeletal tree $T \in \edgetree_{10}$ of $D$  is displayed in Figure \ref{fig:exampcaymapk1} along with its corresponding forest $F \in \R_{10}$.
}
\label{fig:exampdualk1}
\end{figure}

The transparent bijection between dual diagrams and their skeletal trees allows 
us to identify $\dualset{n}$ and $\edgetree_n$. In particular, we shall view the 
Cayley bijection as a correspondence between $\dualset{n}$ and $\F_n$.  Note 
that $\cay{\dual{A}}$ is the Hasse diagram of the poset of edges of $A$ (ordered 
by inclusion).  Compare, for instance, Figures~\ref{fig:exampcaymapk1} and 
\ref{fig:exampk1}. 

\begin{lemma}
\label{lem:helpful}
Let $f=(a_1,\,b_1)\cdots(a_n\,b_n) \in \F_n$, $A =\arch{f}$, $D=\dual{A}$ and $F = \cay{D}$. For each $i \in [1,n-1]$, the following are equivalent:
\begin{enumerate}
\item  $a_i=a_{i+1}$
\item  edge $i+1$ covers edge $i$ in $A$
\item  edge $\overline{i}$ is incident with vertex $d(\overline{i+1})$ in $D$
\item  vertex $i$ is a child of vertex $i+1$ in $F$
\end{enumerate}
\end{lemma}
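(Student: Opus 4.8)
The plan is to establish the chain $(1)\Leftrightarrow(2)\Leftrightarrow(3)\Leftrightarrow(4)$ by relating each condition to the combinatorics of the arch diagram $A$ and its dual $D$. Throughout, recall that in the canonical labelling of $A$, arch $i$ joins vertices $a_i$ and $b_i$ with $a_i < b_i$, so $a_i = l(i)$ and $b_i = r(i)$ in the notation of Section~\ref{sec:arch}.

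First I would handle $(1)\Leftrightarrow(2)$. By the definition of the map $\archmap$, the consecutive factors of $f$ correspond to the arches labelled $i$ and $i+1$, and condition (3) in the definition of an arch diagram forces the label sequence counterclockwise around each vertex to be increasing. The key observation is that $a_i = a_{i+1}$ means arches $i$ and $i+1$ share their left endpoint; combined with the fact that no two arches cross and that $i < i+1$, this is precisely the condition that arch $i+1$ leaves vertex $a_i$ \emph{after} arch $i$ in counterclockwise order, which (since spans are nested or disjoint, and they share an endpoint so cannot be disjoint) forces $\spann(i)\subset\spann(i+1)$; and minimality of this containment — i.e. that $i+1$ covers $i$ — follows because any arch $\ell$ with $\spann(i)\subsetneq\spann(\ell)\subsetneq\spann(i+1)$ would have to nest strictly between them at the shared vertex $a_i$, but by condition (3) its label would satisfy $i < \ell < i+1$, which is impossible for integers. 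Conversely, if $i+1$ covers $i$, I would argue the shared endpoint must be the common left endpoint $a_i = a_{i+1}$: if instead $\spann(i)\subset\spann(i+1)$ with $l(i) > l(i+1)$, then at vertex $l(i+1)$ there is room to insert the arch $(l(i+1), \text{something})$ — more carefully, I would use the structure of $\F_n$ and the fact that $\factmap\circ\archmap$ is the identity to rule this out, or simply invoke the well-known description of covering relations in arch diagrams from~\cite{irvrat}.

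Next, $(2)\Leftrightarrow(3)$ is essentially the statement that $\cay{\dual{A}}$ is the Hasse diagram of the edge poset of $A$, as noted just before the lemma: arch $i+1$ covers arch $i$ in $A$ if and only if the corresponding edges $\overline{i+1}$ and $\overline{i}$ in $D$ are adjacent, with $\overline{i}$ on the side away from the root — that is, $\overline{i}$ is incident with $d(\overline{i+1})$, the child endpoint of edge $\overline{i+1}$. I would spell this out using the planar duality construction: each arch $i$ of $A$ separates two regions, hence two vertices of $H_n$, and these become the endpoints of $\overline{i}$ in $D$; a covering relation $\spann(i)\subset\spann(i+1)$ in $A$ translates to the dual edges sharing exactly the vertex of $H_n$ that lies in the region immediately enclosed by arch $i+1$ but not by arch $i$, which is the child endpoint of $\overline{i+1}$ since $D$'s root is at the far right and edges are directed away from it.

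Finally, $(3)\Leftrightarrow(4)$ is the Cayley bijection read off directly. Under $\caymap:\dualset{n}\to\R_n$, we push the label of edge $\overline{j}$ onto its child endpoint $d(\overline{j})$ and delete the root; the canonical vertex labels of $D$ are arranged so that edge $\overline{j}$ receives the label $j$ on its lower endpoint. Thus vertex $i$ in $F=\cay{D}$ is a child of vertex $i+1$ precisely when edge $\overline{i}$ hangs directly below edge $\overline{i+1}$ in $D$, i.e. when $\overline{i}$ is incident with $d(\overline{i+1})$ — which is exactly condition (3). Here I would just need to check carefully that the canonical vertex labels of $D$ are compatible with the edge labels in the way the Cayley push demands, but this is immediate from the definitions in Section~\ref{sec:dual}.

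I expect the main obstacle to be the reverse direction of $(1)\Leftrightarrow(2)$ — showing that a covering relation in $A$ \emph{forces} the shared endpoint to be the common left endpoint — since this requires ruling out the configuration where arch $i+1$ nests arch $i$ but they share their right endpoint or are disjoint in their left endpoints. The cleanest route is probably to argue contrapositively using condition (3) on counterclockwise label orders together with the no-crossing constraint, or alternatively to cite the explicit description of the covering poset of arch diagrams already developed in~\cite{irvrat}; everything else is a direct unwinding of the definitions of $\archmap$, $\dualmap$, and $\caymap$.
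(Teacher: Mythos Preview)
Your approach is essentially identical to the paper's: both establish $(1)\Leftrightarrow(2)$ via the counterclockwise-increasing condition at the shared left endpoint, and then obtain $(2)\Leftrightarrow(3)\Leftrightarrow(4)$ directly from the definitions of $\dualmap$ and $\caymap$ (together with the observation, stated just before the lemma, that $\cay{\dual{A}}$ is the Hasse diagram of the span poset). The paper's proof is extremely terse---three sentences---and in particular treats the converse direction $(2)\Rightarrow(1)$ that you flag as the main obstacle with the same brevity you worry about; your suggested routes (the no-crossing constraint plus label consecutivity, or citing the covering description from \cite{irvrat}) are exactly what is implicitly being used, so your concern is well-placed but does not represent a divergence from the paper.
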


\begin{proof}
Condition (1) is equivalent to $l(i)=l(i+1)$ in $A$ (by definition of 
$\archmap$), and the requirement that edge labels increase counterclockwise 
around vertex $l(i)$  makes this equivalent to (2). The equivalences $(2) \iff 
(3) \iff (4)$ follow by definition of $\dualmap$ and $\caymap$.
\end{proof}



\subsection{Proof of Theorem~\ref{thm:maintheorem} (Base Case)}

\newcommand{\bari}{\bar{i}}

\begin{lemma} \label{thm:keylemma}
	Let $A \in \archset{n}, D=\dual{A}$ and $F=\cay{D}$. Then for $i \in [1,n]$ 
we have
	$h(i)=r(i)-l(i)$, where $i$ specifies a vertex of $F$ on the left-hand side 
and an edge of $A$ on the right.   More precisely, we have
	$$
		\he_L(i)=d(\bari)-l(i) \qquad\text{and}\qquad
		\he_R(i)=r(i)-d(\bari)-1.
	$$
\end{lemma}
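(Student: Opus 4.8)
The plan is to induct on $n$, or more naturally to induct on the poset structure of the arches of $A$ — equivalently, on the tree structure of $D$. First I would fix notation: for an edge $i$ of $A$, write $l(i), r(i)$ for its endpoints, and recall that the children of $i$ in the inclusion poset are exactly the arches $j$ with $\spann(j) \subset \spann(i)$ that are maximal with this property. A key structural observation is that if $j_1 < j_2 < \cdots < j_m$ are the children of $i$ listed in increasing order of their left endpoints along the $x$-axis, then their spans tile the interval $[l(i)+1, r(i))$ minus possibly some "gaps" — but in fact, because each integer point strictly between $l(i)$ and $r(i)$ lies under exactly one child arch or is an endpoint shared between consecutive children, we get the telescoping identity $r(i) - l(i) = 1 + \sum_{t=1}^m (r(j_t) - l(j_t))$. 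Matching this against $h(i) = 1 + \sum_{v \text{ child of } d(\bar i)} h(v)$ in $F$ gives the first claim $h(i) = r(i) - l(i)$ by induction, once we know the children of $i$ in the $A$-poset correspond (via $\dualmap$ then $\caymap$) to the children of vertex $i$ in $F$. That correspondence is essentially the content of $\cay{\dual{A}}$ being the Hasse diagram of the edge poset of $A$, as noted just before Lemma~\ref{lem:helpful}.

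Next, for the refined statements, I would split the children of $i$ in $F$ (equivalently the children of $i$ in the $A$-poset) into those with label $< i$ and those with label $> i$. The subtlety is that the \emph{left-to-right} order of the child arches along the $x$-axis need not match their label order. However, Lemma~\ref{lem:helpful} and the counterclockwise/clockwise labelling conventions pin this down: going left to right under arch $i$, consecutive children share endpoints, and the rule "labels increase counterclockwise around each vertex of $A$" forces the child arches sharing a common left endpoint to appear in \emph{decreasing} label order as we move right — so the partition into small-label and large-label children interacts cleanly with the $x$-coordinate. More precisely, I expect that $d(\bar i)$, the down-endpoint of edge $\bar i$ in $D$, sits at exactly the $x$-coordinate separating the children of $i$ with smaller label from those with larger label, because $D$'s edge labels decrease counterclockwise (convention (3')) and the Cayley push sends label $i$ down to $d(\bar i)$. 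Then $\he_L(i) = \sum_{v \text{ child of } d(\bar i),\ v < i} h(v)$ telescopes to (distance from $l(i)$ to $d(\bar i)$), i.e. $d(\bar i) - l(i)$, and symmetrically $\he_R(i) = r(i) - d(\bar i) - 1$, the $-1$ accounting for vertex $i$ itself.

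The main obstacle I anticipate is making the bookkeeping in the previous paragraph rigorous: carefully establishing that the planar position $d(\bar i)$ is precisely the dividing $x$-coordinate between the small-labelled and large-labelled sub-hooks of vertex $i$. This requires chasing the two labelling conventions (increasing ccw in $A$, decreasing ccw in $D$) through the dual construction, and is exactly the place where an explicit small example (Figures~\ref{fig:exampk1}, \ref{fig:exampdualk1}, \ref{fig:exampcaymapk1}) should be invoked to fix orientation. Once that dividing-point claim is in hand, both displayed equations follow from the same telescoping argument used for $h(i) = r(i) - l(i)$, restricted to the appropriate sub-collection of children, so the induction closes. I would also remark that summing $\he_L(i) = d(\bar i) - l(i)$ over all $i$ recovers $\majl(F)$, which dovetails with the area statistics and previews how $\areal(f) = \majl(F)$ will drop out in the subsequent proof of the base case of Theorem~\ref{thm:maintheorem}.
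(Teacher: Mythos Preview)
Your approach is correct in outline but more elaborate than the paper's. The paper gives a direct geometric count rather than an induction: since $\bar{i}$ is the only dual edge crossing arch $i$, the dual vertices lying in $\spann(i)$ are exactly the vertices of the hook $H(d(\bar{i}))$, and there are $r(i)-l(i)$ of them, giving $h(i)=r(i)-l(i)$ immediately with no recursion. For the refined identities the paper simply partitions these same dual vertices by whether they lie left or right of $d(\bar{i})$ and applies condition~(3') at the single vertex $d(\bar{i})$ of $D$: because the dual edges around $d(\bar{i})$ have labels decreasing counterclockwise from the $x$-axis, and because $\bar{i}$ (being the unique edge that exits $\spann(i)$) necessarily falls between the right-going and the left-going edges in this cyclic order, the child sub-hooks to the left of $d(\bar{i})$ carry labels $<i$ and those to the right carry labels $>i$. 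Counting dual vertices on each side then gives both displayed formulae in one step.

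Your telescoping identity $r(i)-l(i)=1+\sum_j (r(j)-l(j))$ is valid, but it is just a recursive unpacking of the paper's one-line count. Where your sketch is loosest is the dividing-point claim. You try to reach it through condition~(3) at vertices of $A$, tracking how ``child arches sharing a common left endpoint'' compare in label; that route can be pushed through, but it requires arguing at each shared endpoint along the chain of children of $i$, and the children of arch $i$ are not all incident to a common vertex of $A$. By contrast, condition~(3') at the single dual vertex $d(\bar{i})$ settles the matter in one stroke, since \emph{all} of the relevant arches bound the region containing $d(\bar{i})$ and hence all of the dual edges $\bar{c}$ meet there. You do gesture at~(3') as well, so the right idea is present; but once the dividing-point claim is established cleanly in $D$, the induction scaffolding is no longer doing any work.
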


\begin{proof}
	We refer the reader to Figure
	\ref{fig:key1}, which illustrates a portion of $A$ in solid black and $D$ in dashed red.
	\begin{figure}[tpb]
		\centering
		\includegraphics[width=15cm]{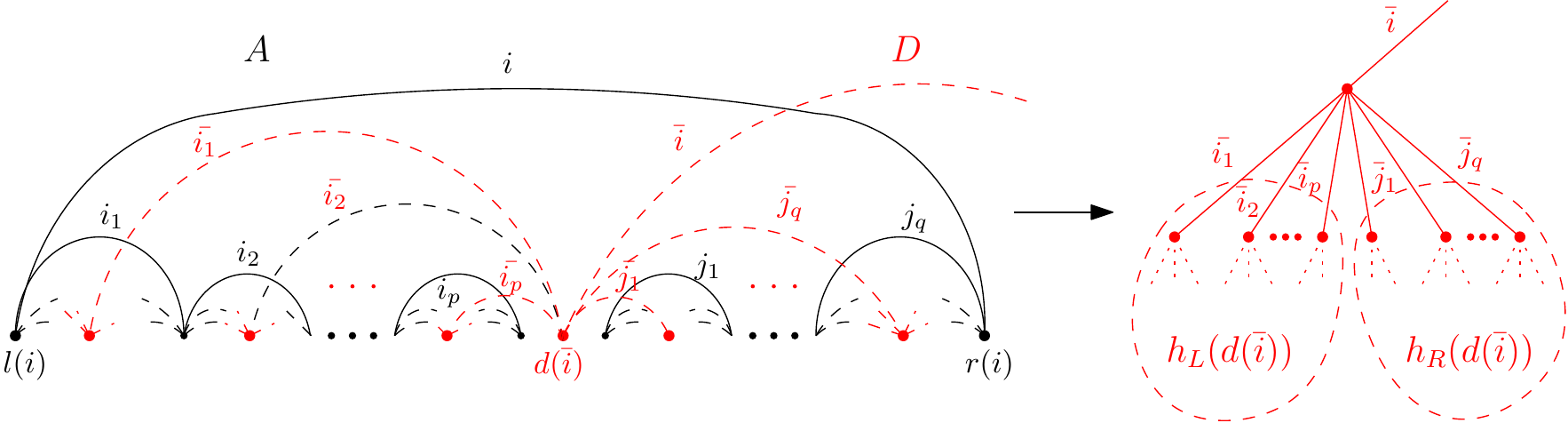}
		\caption{A portion of an arch diagram $A$ (black) and its dual $D$ (red).}
		\label{fig:key1}
	\end{figure}
	Consider edge $i$ of $A$.  
By construction,  only one dual edge  may cross this arch, namely $\bari$. Thus 
the  vertices of $D$ lying within $\spann(i)$ are precisely those in the dual 
hook $H(d(\bari))$.  Clearly there are $r(i)-l(i)$ dual vertices in $\spann(i)$, 
so we have  $h(i)=r(i)-l(i)$.		
	
Let $\bar{j}_1 > \cdots > \bar{j}_q > i > \bari_1 > \cdots > \bari_p$ be the  
edges of $D$ incident with $d(\bari)$.  Since  edge labels must decrease 
counterclockwise around $d(\bari)$, and since edges may not cross, all  vertices 
of $H(d(\bari_1)),\ldots,H(d(\bari_p))$  must lie to the left of $d(\bari)$, 
while those of $H(d(\bar{j}_1)),\ldots,H(d(\bar{j}_q))$ lie to the right. There 
are $d(\bari)-l(i)$ dual vertices in $\spann(i)$ to the left of $d(\bari)$, and 
$r(i)-d(\bari)-1$   to the right.  Thus $d(\bari)-l(i)=\sum_{s=1}^p 
h(d(\bari_s))$, and this is precisely the left hook length $h_L(i)$ in $F$.  
The expression for $h_R(i)$ follows since $h(i) = h_L(i) + h_R(i) +1$.
\end{proof}

\begin{example}\label{ex:lemma}
	In Figure \ref{fig:exampdualk1}, we have $d(\bar{4})=7$ and  hook $H(7)$ of $D$ contains all dual (red) vertices lying within $\spann(4)$, namely $\{3,4,\ldots,9\}$.  Those to the left of $7$ contribute to $\he_L(4)$ and those to the right contribute to $\he_R(4)$.  There are $d(\bar{4})-l(4)=7-3=4$ to the left and $r(4)-d(\bar{4})-1=10-7-1=2$ to the right.
\end{example}

For fixed $n \geq 1$, let $\cdamap$ denote the composite map $\caymap \circ \dualmap \circ \archmap : \F_n \rightarrow \R_n$, where again $\caymap$ is interpreted to act on the skeletal trees of dual diagrams. We now show that this bijection satisfies the conditions described by Theorem~\ref{thm:maintheorem}, thus proving the theorem in the case $k=1$. (Recall that $\mathrm{(co)semiarea}_k$ coincides with $\mathrm{(co)area}$  in this case.)
	
\begin{theorem}
	The map $\cdamap : \F_n
	\rightarrow \R_n$ is a bijection, and for $F=\cda{f}$ we have 
	$\areal(f) = \majl(F)$ and $\areau(f) = \majr(F)$.  Moreover, if
	 $f=(a_1\,b_1)\cdots(a_n\,b_n)$ then   $h(i)=a_i-b_i$ for $1 \leq i \leq n$, where $h(i)$ is the  hook length at vertex $i$ in $F$.
	\label{thm:kone}
\end{theorem}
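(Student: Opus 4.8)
The plan is to leverage that $\cdamap$ is, by definition, the composite $\caymap \circ \dualmap \circ \archmap$ of three bijections, so that bijectivity is automatic, and then to extract the remaining claims directly from Lemma~\ref{thm:keylemma}. Fix $f = (a_1\,b_1)\cdots(a_n\,b_n) \in \F_n$ and put $A = \arch{f}$, $D = \dual{A}$ and $F = \cay{D} = \cda{f}$. Since cycles are written with least element first we have $a_i < b_i$, so arch $i$ of $A$ has left endpoint $l(i) = a_i$ and right endpoint $r(i) = b_i$. Lemma~\ref{thm:keylemma} then gives $h(i) = r(i) - l(i) = b_i - a_i$ for every $i$, which is the hook-length assertion of the theorem.

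For the two area identities I would first establish the bookkeeping fact that $\sum_{i=1}^n d(\bar{i}) = \binom{n}{2}$. The point is that in any dual diagram the assignment $\bar{i} \mapsto d(\bar{i})$ is a bijection from the edge set $\{\bar 1,\ldots,\bar n\}$ of $D$ onto the set of non-root vertices of $D$: as $D$ is a tree, each non-root vertex is the child endpoint of precisely one edge. Under the canonical labelling of $\dualset{n}$ the vertices of $D$ bear the labels $0,1,\ldots,n$, with the root located at $(n+\tfrac12,0)$ and hence labelled $n$. Therefore $\{d(\bar 1),\ldots,d(\bar n)\} = \{0,1,\ldots,n-1\}$, and summing yields $\sum_{i=1}^n d(\bar{i}) = \binom{n}{2}$.

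It then remains to assemble the pieces. Recalling that vertex $i$ of $F$ corresponds to edge $\bar{i}$ of $D$ (equivalently, to arch $i$ of $A$), Lemma~\ref{thm:keylemma} gives $h_L(i) = d(\bar{i}) - l(i) = d(\bar{i}) - a_i$ and $h_R(i) = r(i) - d(\bar{i}) - 1 = b_i - d(\bar{i}) - 1$. Summing over the $n$ vertices of $F$ and invoking $\sum_i d(\bar{i}) = \binom{n}{2}$,
\begin{equation*}
\majl(F) = \sum_{i=1}^n \bigl( d(\bar{i}) - a_i \bigr) = \binom{n}{2} - \sum_{i=1}^n a_i = \areal(f)
\end{equation*}
and
\begin{equation*}
\majr(F) = \sum_{i=1}^n \bigl( b_i - d(\bar{i}) - 1 \bigr) = \sum_{i=1}^n (b_i - 1) - \binom{n}{2} = \areau(f),
\end{equation*}
as required.

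Since Lemma~\ref{thm:keylemma} carries essentially all of the combinatorial content, I do not anticipate a serious obstacle; the one step to be careful with is the identification of the relevant vertex/edge labels --- namely that arch $i$ of $A$ has left endpoint $a_i$ and right endpoint $b_i$, and that the root of $D$ is the vertex labelled $n$ --- since it is precisely this that makes the two hook-length sums telescope against $\binom{n}{2}$ with the correct signs to produce $\areal$ and $\areau$.
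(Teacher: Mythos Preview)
Your proof is correct and follows essentially the same route as the paper: invoke bijectivity of the three constituent maps, read off $h(i)=b_i-a_i$ from Lemma~\ref{thm:keylemma}, observe that $\{d(\bar 1),\ldots,d(\bar n)\}=\{0,\ldots,n-1\}$ so that $\sum_i d(\bar i)=\binom{n}{2}$, and then sum the left/right hook-length identities of the lemma to obtain $\areal$ and $\areau$. The only difference is cosmetic --- you spell out the $\areau$ computation whereas the paper leaves it as ``similar'' --- and you have implicitly corrected the sign typo in the theorem statement (it should read $h(i)=b_i-a_i$, as both you and the paper's own proof have it).
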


\begin{proof}
We have already seen that $\cdamap \,:\, \F_n \rightarrow \R_n$ is a bijection.  

Let $f = (a_1\,b_1)\cdots(a_n\,b_n) \in  \F_n$, $A = \arch{f}$ and $D = \dual{A}$.  By construction  we have $l(i)=a_i$ and $r(i)=b_i$ for each arch $i$ of $A$, so Lemma~\ref{thm:keylemma} gives $h(i)=b_i-a_i$ for all $i$. 
Note that every non-root  vertex of $D$ is the endpoint $d(\bari)$ of a unique edge $\bari$. Thus $\{d(\bar{1}),\ldots,d(\bar{n})\}$ is a rearrangement of $\{0,\ldots,n-1\}$. With  Lemma~\ref{thm:keylemma}  we find that
	\begin{equation*}\label{eq:sumsegl}
		\majl(F) = \sum_{i \in [1,n]} \he_L(i) = \sum_{i \in [1,n]} (d(\bari)-l(i)) =  \binom{n}{2} - \sum_{i=1}^n a_i = \areal(f).
	\end{equation*}
	The proof that $\areau(f) = \majre(D)$ is similar. 
\end{proof}

\begin{example}\label{ex:taui}
	We return to Figure \ref{fig:exampdualk1}, which shows $A=\arch{f}$
	 and $D=\dual{A}$ for the 
	factorization $f=(a_1\,b_1)\cdots(a_{10}\,b_{10})$ given by
	$$
	 (6\, 7)(0\, 2) (3\, 6) (3\, 10) (8\, 9)
	(0\, 3) (5\, 6) (4\, 5) (8\, 10) (1\, 2).
	$$ 
	Note  $(a_4\,b_4)=(3\,\,10)$ and $h(d(\bar{4}))=7=a_4-b_4$.
 	We have $\areal(f) = \binom{10}{2}-\sum a_i = 7$ and $\areau(f)=\sum 
(b_i-1)-\binom{10}{2}=5$.  The skeletal tree of $D$ is shown in Figure
	\ref{fig:exampcaymapk1} along with its image $F$ under $\caymap$. We find that $\majl(F) = 7$ and $\majr(F) = 5$, in agreement with  Theorem
	\ref{thm:kone}.
\end{example}

\section{The main bijection for general $k$}\label{sec:genk}

In this section we prove Theorem \ref{thm:maintheorem} in the general case, building on the constructions used to prove Theorem~\ref{thm:kone}.

Geometrically, our bijection from $\F_n^k$ to $\R_n^k$ is a natural   extension
of the map $\cdamap$ described in Section~\ref{sec:kone}.  Whereas
factorizations $f \in \F_n$ correspond with  planar embeddings of edge-labelled
trees on $n$ edges  (\emph{i.e.} arch diagrams),  $k$-factorizations $f \in
\F_n^k$ correspond with   embeddings of   $k$-cacti having $n$ labelled
polygons.  We call such  configurations  \emph{generalized arch diagrams}.
Duals can be constructed analogously to the case $k=1$, and the skeletal
$k$-cacti of the duals then correspond with a $k$-forests $F \in \R_n^k$ as
described in Section~\ref{sec:kforests}. This generalized mapping $f \mapsto F$
is illustrated in Figure~\ref{fig:cactus}.
\begin{figure}[tbp]
	\centering
	\includegraphics[width=.95\textwidth]{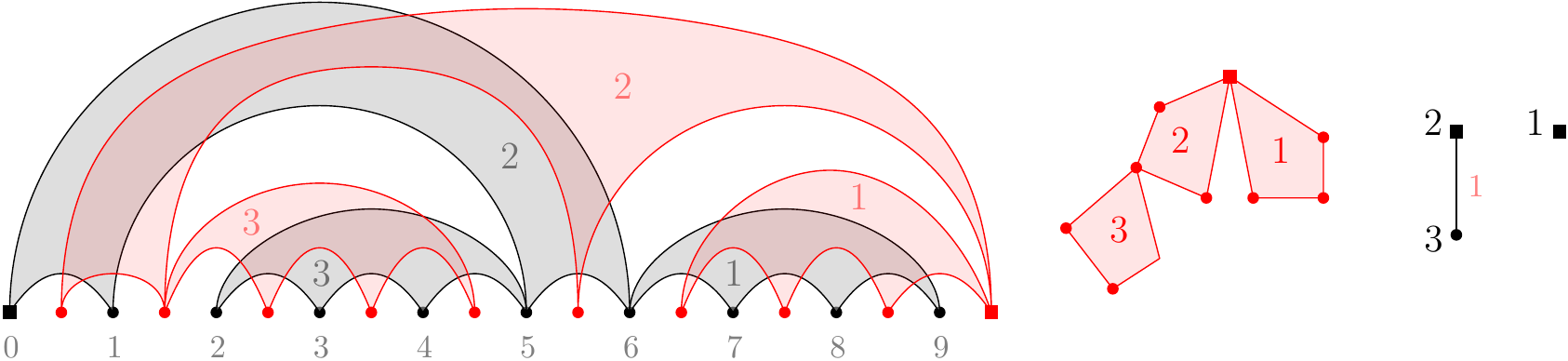}
	\caption{At left, the generalized arch diagram corresponding to the $3$-factorization $f=(6\,7\,8\,9)(0\,1\,5\,6)(2\,3\,4\,5)$ is shaded grey  and its dual is shaded  red. The skeletal $3$-cactus of the dual and its corresponding 3-forest are shown to the right. }
	\label{fig:cactus}
\end{figure}

Describing this simple geometrical transformation in sufficient detail to properly track statistics proves  somewhat cumbersome. For the purposes of bookkeeping, we shall instead define this same mapping as a chain $\F_n^k \rightarrow \F_{kn} \rightarrow \R_{kn} \rightarrow \R_n^k$ having the bijection $\cdamap:\,\F_{kn} \rightarrow \R_{kn}$  at its core.  This allows direct use of Theorem~\ref{thm:kone}.

\subsection{Defining the Bijection}

Recall from Section~\ref{sec:parks} the function $\brkmap
: \F_n^k \rightarrow \F_{kn}$  that replaces each factor of $f \in \F_n^k$ with its lower decomposition.  As noted there, $\brkmap$ is injective for all $k$ but not surjective for $k > 1$. 
Let  $\barfset_{kn} := \brkmap(\F_n^k)$ be  its image and let $\barf := \brkmap(f)$.
 
Explicitly, a $k$-factorization 
\begin{equation}\label{eq:genfact}
	f = \genkfact{k}{n} \in \F_n^k,
\end{equation} 
decomposes into the 1-factorization
$$
	\barf=(a_1^0\, a_1^1)(a_1^0\, a_1^2)\ldots(a_1^0\, a_1^k)\cdots
	(a_n^0\, a_n^1)(a_n^0\, a_n^2)\ldots(a_1^0\, a_n^k) \in \barfset_{kn}.
	$$
For notational convenience we shall label the factors of $\barf$ with a linearly ordered set different from $[1,kn]$.  We  instead use
\begin{equation}\label{eq:labsverts}
	S=\{1_1, 1_2,\ldots,1_k, 2_1, 2_2, \ldots, 2_k, \ldots, n_1, n_2, \ldots, n_k\},
\end{equation} 
ordered as they are presented above; that is, $p_q < r_s$ if and only if $p < r$ or $p=r$ and $q < s$. These labels  are applied in left-to-right order to the factors of $\barf$, so   $(a_j^0\, a_j^i)$  gets label $j_i$. 

\begin{example}\label{ex:genkbreak}
	A factorization $f \in \F_{10}^2$ and its decomposition $\barf \in \barfset_{20}$ are shown below.
	\begingroup\makeatletter\def\f@size{10}\check@mathfonts
\def\maketag@@@#1{\hbox{\m@th\large\normalfont#1}}%
	\begin{align*}
		f=\underset{1}{(0\; 1\; 4)}\underset{2}{(6\; 7\;
		8)}\underset{3}{(13\; 16\; 17)}\underset{4}{(5\; 6\;
		9)}\underset{5}{(18\; 19\; 20)}&\underset{6}{(0\; 13\;
		18)} \underset{7}{(10\; 11\; 12)}\underset{8}{(5\; 10\;
		13)}\underset{9}{(2\; 3\; 4)}\underset{10}{(14\; 15\;
		16)}\\
		\brkmap  \downarrow &\\
		\barf=
		\underset{1_1}{(0\; 1)}\underset{1_2}{(0\;
		4)}\underset{2_1}{(6\; 7)}\underset{2_2}{(6\;
		8)}\underset{3_1}{(13\; 16)}\underset{3_2}{(13\;
		17)}\underset{4_1}{(5\; 6)}&\underset{4_2}{(5\; 9)}
		\underset{5_1}{(18\; 19)}\underset{5_2}{(18\; 20)}
		\underset{6_1}{(0\; 13)}\underset{6_2}{(0\; 18)}
		\underset{7_1}{(10\; 11)}\underset{7_2}{(10\; 12)}\\
		&\underset{8_1}{(5\; 10)}\underset{8_2}{(5\; 13)}
		\underset{9_1}{(2\; 3)}\underset{9_2}{(2\; 4)}
		\underset{10_1}{(14\; 15)}\underset{10_2}{(14\; 16)}
	\end{align*}\endgroup
\end{example}

\exclude{
It follows from Lemma~\ref{lem:helpful} that  $j_{i}$ is a child of $j_{i+1}$ in $\cda{f^*}$ for all $j \in [1,n]$,  $i \in[k-1]$; and, moreover,  this property characterizes the subset $\cda{\barfset_{kn}}$ of $\R_{kn}$.  
}

Let $\barroot_{kn}:=\cda{\barfset_{kn}}$ be the image of $\F_n^k$ under $\cdamap \circ \brkmap$.  Lemma~\ref{lem:helpful} immediately provides the following characterization of this subset of  $\R_{kn}$:
\begin{equation}
	\label{eq:defbarroot}
	\barroot_{kn} =
	\{F \in \R_{kn} : 
		\text{vertex $j_{i}$ is a child of $j_{i+1}$ for all $j \in [1,n]$,  $i \in[k-1]$}\}.
\end{equation}
Certainly $\cdamap \circ \brkmap$ is a bijection from $\F_n^k$ to $\barroot_{kn}$. We ultimately want a bijection from $\F_n^k$ to $\R_n^k$, so we introduce another map $\joinmap :
\barroot_{kn} \rightarrow \R_n^k$ that acts on $\Fbar \in \barroot_{kn}$ by
\begin{equation}\label{eq:propsrnk}
	\begin{split}
		\text{\parbox{0.85\textwidth}{(1)  assigning colour $k-i$ to each edge $(j_i,r_s)$ with $r \neq j$.}} \\		
		\text{\parbox{0.85\textwidth}{(2)  merging  vertices $j_1, \dots, j_k$ into one vertex with label $j$.}}
	\end{split}
\end{equation}
Note the second step is permitted because~\eqref{eq:defbarroot} ensures  $j_1, 
\dots, j_k$ form a path, and so can be identified without creating cycles or 
parallel edges.  This function is clearly both one-one and onto. 


We have therefore defined a sequence of bijections from $\F_n^k$ to $\R_n^k$:
\begin{equation*}\label{eq:bijstring}
	\F_n^k \stackrel{\brkmap}{\longrightarrow} \barfset_{kn}
	\stackrel{\cdamap}{\longrightarrow} \barroot_{kn}
	\stackrel{\joinmap}{\longrightarrow} \R_n^k.
\end{equation*}
We denote this composite mapping by $\jcdabmap : \F_n^k
\longrightarrow \R_n^k$. In the next section we will prove that it satisfies 
the  properties of $\phi$ in Theorem~\ref{thm:maintheorem}. 

\begin{example}\label{ex:genkbreak2}
	Let $f \in \F_{10}^2$ be as in Example~\ref{ex:genkbreak}. 
	The arch diagram of $\barf$ along with its dual are shown at the top of Figure~\ref{fig:archbreakk}. The forest $\cda{\barf} \in \barroot_{20}$ is shown at bottom left, and $\jcdab{f} \in \R_{10}^2$ at bottom right.
\end{example}
\begin{figure}[tbp]
	\centering
	\includegraphics[width=\textwidth]{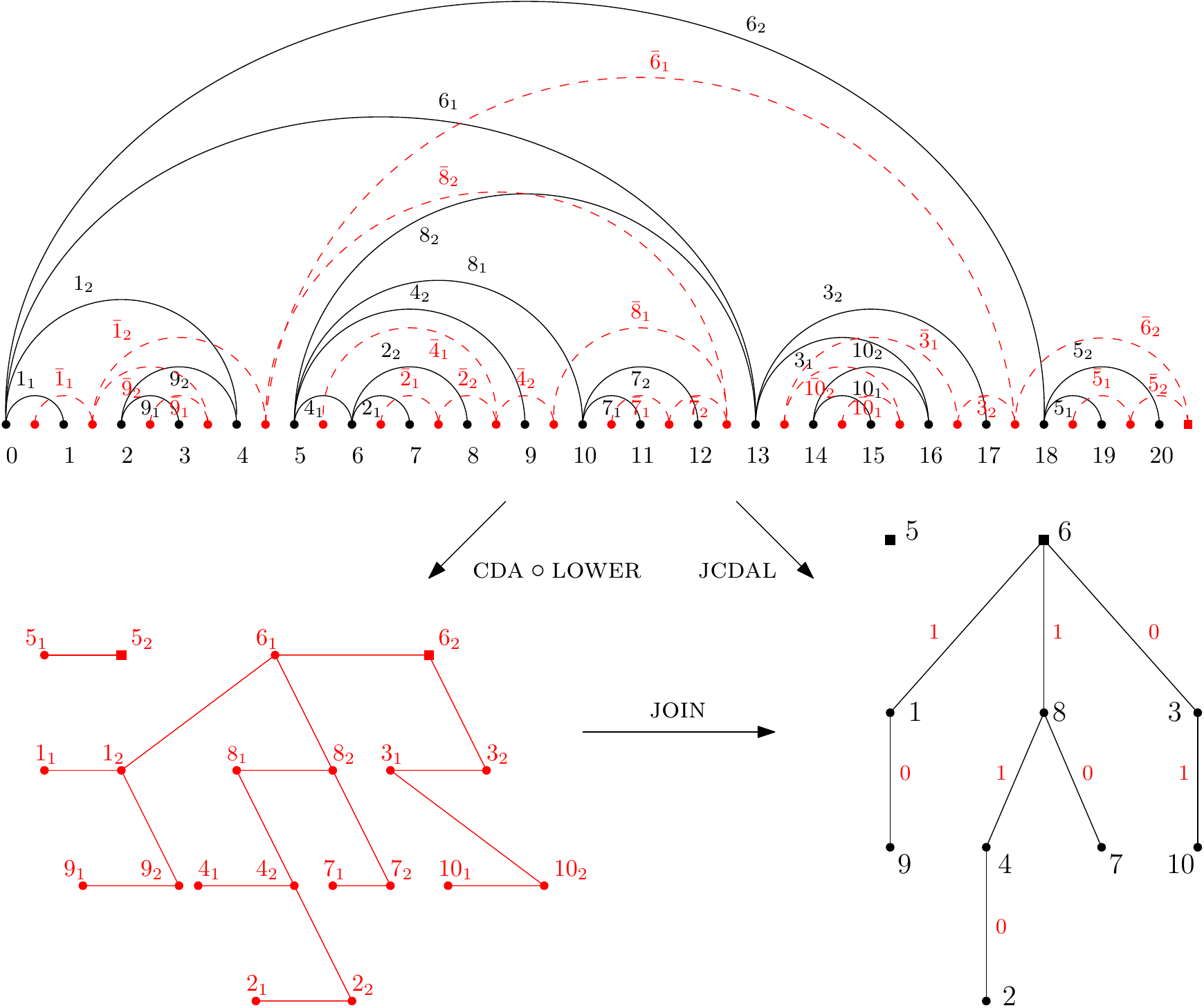}
	\caption{Let $f$ be the factorization given in Example \ref{ex:genkbreak}.
	Displayed at the top in black is $\archmap \circ \brk{f}$.  At the top in red is
$\dualmap \circ \archmap \circ \brk{f}$.  At the bottom left is $\cdamap \circ \brk{f}$
and at the bottom right is $\jcdab{f} \in \R_{10}^2$.}
	\label{fig:archbreakk}
\end{figure}

\subsection{Proof of Theorem~\ref{thm:maintheorem} (General Case)}

We  split the proof of Theorem~\ref{thm:maintheorem} into two pieces.  First we show that  $\jcdabmap$ maps $(\arealk,\sareauk)$ to $k\cdot(\majlk,\majr)$, and later that it sends $(\areauk,\sarealk) \mapsto k\cdot(\majrk,\majl)$.

\begin{proposition}
	\label{prop:mainA}
	 Let $f = \genkfact{k}{n} \in \F_n^k$ and $F = \jcdab{f} \in \R_n^k$.  Then 	$\arealk(f) = k \cdot \majlk(F)$ and $\sareauk(f) = k \cdot \majr(F)$. 
Moreover,  $a_j^k-a_j^0=k\cdot h(j)$ for $j \in [1,n]$, where $h(j)$ is the  hook length at  $j$ in $F$.\end{proposition}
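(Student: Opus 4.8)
The plan is to transport everything through the chain $\F_n^k \xrightarrow{\brkmap} \barfset_{kn} \xrightarrow{\cdamap} \barroot_{kn} \xrightarrow{\joinmap} \R_n^k$ and reduce to Theorem~\ref{thm:kone}. Write $\barf = \brkmap(f)$, $\barF = \cda{\barf} \in \barroot_{kn}$, and $F = \join{\barF}$. The factors of $\barf$ are labelled by the set $S$ of~\eqref{eq:labsverts}, with $(a_j^0\,a_j^i)$ carrying label $j_i$, so applying Theorem~\ref{thm:kone} to $\barf \in \F_{kn}$ gives $h_{\barF}(j_i) = a_j^i - a_j^0$ for each $j \in [1,n]$ and $i \in [1,k]$; in particular $h_{\barF}(j_k) = a_j^k - a_j^0$. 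Since~\eqref{eq:defbarroot} says $j_1, \ldots, j_k$ form a descending path in $\barF$ and $\joinmap$ merges them into the single vertex $j$ of $F$, the hook at $j$ in $F$ is exactly the hook at $j_1$ in $\barF$ with the $k-1$ extra path-vertices $j_2, \ldots, j_k$ removed, so $h_F(j) = h_{\barF}(j_1) - (k-1)$. The key combinatorial observation is that the hook length $h_{\barF}(j_i)$ drops by exactly $1$ as $i$ increases by $1$ (each step down the path $j_1 \to \cdots \to j_k$ loses one vertex), hence $h_{\barF}(j_1) = h_{\barF}(j_k) + (k-1) = a_j^k - a_j^0 + (k-1)$, and therefore $h_F(j) = a_j^k - a_j^0$. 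Comparing with $h_{\barF}(j_k) = a_j^k - a_j^0$ we also get $a_j^k - a_j^0 = k \cdot h_F(j)$ once we know $h_{\barF}(j_k) = k\,h_F(j)$; the cleanest way to see this last equality is to note $a_j^i - a_j^0$ is strictly increasing in $i$ with consecutive gaps all equal (again by the path structure and $h_{\barF}(j_i) = h_{\barF}(j_{i+1})+1$ forcing $a_j^i - a_j^0 = i\cdot(\text{common gap})$), and the common gap is $h_F(j)$ since at $i=1$ we have $a_j^1 - a_j^0 = h_{\barF}(j_1) - (k-1) = h_F(j)$. This proves the ``moreover'' clause.

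Next I would compute $\arealk(f)$. By definition $\arealk(f) = \binom{kn}{2} - n\binom{k}{2} - k\sum_{j=1}^n a_j^0$. On the forest side, Theorem~\ref{thm:kone} applied to $\barf$ gives $\majl(\barF) = \areal(\barf) = \binom{kn}{2} - \sum_{p_q \in S} (\text{left endpoint of factor } p_q) = \binom{kn}{2} - \sum_{j=1}^n k\,a_j^0$, since every factor $j_i = (a_j^0\,a_j^i)$ has left endpoint $a_j^0$ and there are $k$ of them for each $j$. So $\majl(\barF) = \binom{kn}{2} - k\sum_j a_j^0$. Now I must compare $\majl(\barF)$ with $k\cdot\majlk(F) = k(\majl(F) + \chr_k(F))$. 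This is the heart of the argument: the $\joinmap$ operation both contracts the internal path vertices $j_2,\ldots,j_k$ (changing $\majl$) and records colours (producing $\chr_k$), and these two effects must combine to exactly account for the discrepancy $\binom{kn}{2} - k\sum a_j^0$ versus $k(\majl(F)+\chr_k(F))$, with the leftover $-n\binom{k}{2}$ coming from a clean count. I would handle the $\majl$ bookkeeping edge by edge: an edge $(u,v)$ of $\barF$ is either an internal path edge $(j_i, j_{i+1})$, which gets contracted and contributes $h_{\barF}(j_{i+1}) = a_j^k - a_j^0 - i$ to $\majl(\barF)$ iff $j_{i+1} < j_i$ — but $j_{i+1} > j_i$ always in the ordering on $S$, so these contribute to $\majr(\barF)$, not $\majl(\barF)$, hence internal edges contribute $0$ to $\majl(\barF)$; or it is an edge $(j_i, r_s)$ with $r \neq j$, which $\joinmap$ turns into an edge $(j, r)$ of $F$ with colour $k - i$ and with $h_F(r) = h_{\barF}(r_s)\big/\ldots$ — more precisely $h_F(r)$ equals $h_{\barF}(r_1) - (k-1)$ and one checks $h_{\barF}(r_s) = h_F(r) + (s-1)$, but what matters for the hook \emph{of the child} is that the child endpoint of $(j_i, r_s)$ in $\barF$ is $r_s$ and, because edges from $\barroot_{kn}$ that leave the $j$-path attach only at... — here I would use that $r \neq j$ forces $r_s$ to be the root $r_1$ of its own path segment (an edge into $r_s$ from outside can only land on $r_1$, again by~\eqref{eq:defbarroot}), so $s = 1$ and $h_{\barF}(r_1) = h_F(r) + (k-1)$. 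Thus each non-internal edge $(j_i, r_1)$ of $\barF$ contributes $h_{\barF}(r_1) = h_F(r) + (k-1)$ to $\majl(\barF)$ when $j_i > r_1$, i.e.\ when $j > r$ (note $j \neq r$), which is exactly the condition for $(j,r)$ to count in $\majl(F)$.

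Summing, $\majl(\barF) = \sum_{(j,r) \in E(F),\, r < j} \big(h_F(r) + (k-1)\big)$. The main identity then falls out by splitting the sum: $\sum_{(j,r)\in E(F),\,r<j} h_F(r) = \majl(F)$ by definition, and I still owe the colour term. Here I pause — the argument as just sketched gives $\majl(\barF) = \majl(F) + (k-1)\,|\{(j,r)\in E(F): r<j\}|$, which does \emph{not} obviously equal $k(\majl(F)+\chr_k(F))$, so the bookkeeping above is not yet right. \textbf{The real obstacle, and where I would concentrate the work,} is that the $k$ copies $j_1,\ldots,j_k$ of each vertex each carry their own outgoing edges in $\barF$, and an edge $(r,s)$ of $F$ with colour $c = k-i$ comes from the specific copy $j_i$ — so for a fixed edge $(j,r)$ of $F$ there is exactly \emph{one} preimage edge $(j_i, r_1)$ in $\barF$, whose child-hook is $h_{\barF}(r_1)$; but the vertex $r$ of $F$, with hook $h_F(r)$, corresponds in $\barF$ to the \emph{path} $r_1 \to \cdots \to r_k$, whose total weight when it hangs below $j_i$ in $\barF$ is $h_{\barF}(r_1) = k\,h_F(r) + \binom{k}{?}$... no: from the ``moreover'' clause $h_{\barF}(r_k) = a_r^k - a_r^0 = k\,h_F(r)$ and $h_{\barF}(r_1) = h_{\barF}(r_k) + (k-1) = k\,h_F(r) + (k-1)$. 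So an edge $(j_i, r_1)$ with $j_i > r_1$ contributes $k\,h_F(r) + (k-1)$ to $\majl(\barF)$ — and I \emph{also} need the internal edges of the $r$-path that lie to the left, but those have $r_{s} < r_{s+1}$, contributing to $\majr$ not $\majl$. Hence $\majl(\barF) = \sum_{(j,r)\in E(F),\, r<j} \big(k\,h_F(r) + k - 1\big) = k\,\majl(F) + (k-1)|E_{<}(F)|$ where $E_<(F)$ is the set of ``left'' edges. For this to equal $k\,\majlk(F) = k\,\majl(F) + k\,\chr_k(F)$ I would need $k\,\chr_k(F) = (k-1)|E_<(F)|$, which is false in general — so the colour term must actually enter through the edges of $F$ that point \emph{the other way} or through a more careful identification of \emph{which} copy $j_i$ the $F$-edge comes from, linking colour $c$ to a child-hook contribution of $c\cdot h_F(r)$ rather than $(k-1)$. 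I therefore expect the correct computation to show that the edge $(j,r)$ of colour $c$ in $F$ corresponds to a preimage in $\barF$ contributing $c\cdot h_F(r)$ (from copies/path-segments below the attachment point) plus the $h_F(r)$ already counted, i.e.\ a $(j_i, \cdot)$ edge whose downstream weight in $\barF$ is $(c+1)h_F(r)$ or similar, so that $\majl(\barF) = \sum_{(j,r)\in E_<(F)} h_F(r) + \sum_{(j,r)\in E(F)} c(j,r)\, h_F(r) + (\text{pure count}) = \majl(F) + \chr_k(F) + \tfrac{1}{k}(\cdots)$, and multiplying through and reconciling the pure count with $n\binom{k}{2}$ finishes it; then $\sareauk(f) = k\,\majr(F)$ follows symmetrically from the $h_R$/comajor half of Lemma~\ref{thm:keylemma} applied to the upper endpoints $a_j^i$, noting $\sareauk$ sums $a_j^i - 1$ over \emph{all} $i \in [1,k]$ (not weighted by $k$), matching the $k$ distinct right endpoints $a_j^1,\ldots,a_j^k$ of the factors $j_1,\ldots,j_k$ of $\barf$, with the colour contributions cancelling in $\majr$ exactly as they must since $\joinmap$'s colours only touch the $\majl$/$\chr_k$ side. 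The crux, to be pinned down with a careful edge-by-edge weight accounting in $\barF$ versus $F$, is precisely this identification of which copy $j_i$ a colour-$c$ edge of $F$ descends from and the resulting $c\,h_F(r)$ contribution.
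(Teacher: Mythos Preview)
Your overall strategy---push everything through $\brkmap$, apply Theorem~\ref{thm:kone}, then compare $\barF$ to $F$---is exactly right, and it is the paper's strategy too.  But there is a pervasive orientation error that derails both parts of your argument.  By~\eqref{eq:defbarroot}, vertex $j_i$ is a \emph{child} of $j_{i+1}$ in $\barF$, so the path runs $j_k \to j_{k-1}\to\cdots\to j_1$ with $j_k$ nearest the root, not the other way around.  Three of your key claims are casualties of this reversal:
\begin{itemize}
\item The internal edges are $(j_{i+1},j_i)$ with child $j_i<j_{i+1}$, so they are \emph{left} edges and do contribute to $\majl(\barF)$, not $\majr(\barF)$.
\item An external edge $(j_s,r_t)$ with $r\neq j$ must have $t=k$, since for $t<k$ the parent of $r_t$ is forced to be $r_{t+1}$; it does not land on $r_1$.
\item The hook lengths satisfy $h_{\barF}(j_i)=a_j^i-a_j^0$, which \emph{increases} in $i$ and not by $1$ in general (e.g.\ the factor $(0\,1\,4)$ in Example~\ref{ex:genkbreak}).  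So your ``consecutive gaps equal'' argument for the hook claim fails.  The clean proof is: $h_{\barF}(j_k)=a_j^k-a_j^0$ by Theorem~\ref{thm:kone}, and the hook at $j_k$ in $\barF$ consists of $k$ vertices $r_1,\ldots,r_k$ for each $r$ in the hook at $j$ in $F$, whence $h_{\barF}(j_k)=k\,h_F(j)$.
\end{itemize}

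With the orientation fixed, your edge-by-edge approach actually goes through.  External left edges contribute $\sum_{(j,r)\in E(F),\,r<j} h_{\barF}(r_k)=k\,\majl(F)$.  For the internal edges, writing $C_s(j)$ for the children $r$ of $j$ in $F$ with $\col(j,r)=k-s$ (equivalently, $(j_s,r_k)\in E(\barF)$), one has $h_{\barF}(j_i)=i+k\sum_{s\le i}\sum_{r\in C_s(j)} h_F(r)$, and summing over $i=1,\ldots,k-1$ and then over $j$ gives $n\binom{k}{2}+k\,\chr_k(F)$.  Adding the two pieces yields $\majl(\barF)=n\binom{k}{2}+k\,\majlk(F)$, which combined with $\arealk(f)=\majl(\barF)-n\binom{k}{2}$ finishes the first identity.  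The paper packages this same computation differently, via an iterative ``edge-collapsing'' process on weighted $k$-forests (Lemma~\ref{thm:majrel}): one contracts the internal edges $(j_{i+1},j_i)$ one at a time, tracking the change in a weighted $\majlk$, and the bookkeeping produces the $n\binom{k}{2}$ directly.  Either route works once the path is oriented correctly; the missing ingredient in your write-up is precisely that the colour $\col(j,r)=k-s$ records \emph{which level} $j_s$ the external edge attaches to, and the internal-edge contribution $\sum_{i<k}h_{\barF}(j_i)$ is what converts that attachment data into $k\,\chr_k(F)$.
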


\begin{proof}
Let $\barf = \brk{f}$. Then by definition of area/coarea we have
\begin{xalignat}{2}
	&\areal(\barf) = \binom{kn}{2} - k \sum_{j=1}^n a^0_j,
	&
	&\areau(\barf) = \sum_{i=1}^k \sum_{j=1}^n (a_j^i -1) - \binom{kn}{2}, \notag
\intertext{and from the definition of $\arealk$ and $\areauk$, there follows}
	\label{eq:compareareas}
	&\arealk(f) = \areal(\barf) - n \binom{k}{2},
	&
	&\sareauk(f) = \areau(\barf).
\intertext{Now let $\barF = \cda{\barf}$ and $F=\join{\barF}$.  Theorem \ref{thm:kone} gives}
	\label{eq:barfbarF}
	&\areal(\barf) = \majl(\barF),
	&
	&\areau(\barf) = \majr(\barF),
\intertext{which together with \eqref{eq:compareareas} reduces the first claims of the proposition to  the  identities}
	\label{eq:lemmaid}
		&\majl(\barF) = n \binom{k}{2} + k \cdot \majlk(F),
		&
		&\majr(\barF) = k \cdot \majr(F).
\end{xalignat}
These will be verified below in Lemma~\ref{thm:majrel}, following an example. For the final claim of the proposition,  observe that the 
factor $(a_j^0\,a_j^k)$ of $\barf$ has label $j_k$, so Theorem~\ref{thm:kone} 
implies $a_j^k-a_j^0$ is the size of the hook at vertex $j_k$ in $\barF$.   The 
definition of $\joinmap$ makes clear that this is $k$ times the size of the hook 
at vertex $j$ of $F$.
\end{proof}

\begin{example}\label{ex:barFvalues}
	Continuing with Example \ref{ex:genkbreak},  we have
		\begin{align*}
		\arealk(f) &= \binom{kn}{2} - k \cdot  \sum_j a^0_j -
		n\binom{k}{2} = 190 - 2 \cdot 73 - 10 \cdot 1 =  34\\
		 \sareauk(f) &= \sum_{i=1}^{k}
		\sum_{j=1}^n (a_j^i - 1) - \binom{kn}{2} = 202 - 190 = 12.
	\end{align*}
	We also find that  $\areal(\barf) = 44$ and $\areau(\barf) =
	12$, and for $\barF$ in Figure \ref{fig:archbreakk} (bottom left) we have $\majl(\barF) = 44$ and $\majr(\barF) = 12$, in agreement with \eqref{eq:compareareas} and \eqref{eq:barfbarF}.	
	
		The relevant statistics for $F = \jcdab{f}$ in 
	Figure \ref{fig:archbreakk} (bottom right) are
	 $\majlk(F) =\majl(F)+\chr_k(F)=8+9=17$ and $\majr(F) = 6$. Comparing these with the values of $\arealk(f)$ and $\sareauk(f)$ above verifies the first claims Proposition~\ref{prop:mainA}. Note that $a_8^k-a_0^k=13-5=8$ while $h(8)=4$ in $F$, in accord with the second claim.
\end{example}

\newcommand{\wt}{\mathrm{wt}}
\newcommand{\wthk}{\bar{h}}
\newcommand{\wtmajl}{\overline{\mathrm{maj}}}
\newcommand{\wtmajr}{\overline{\mathrm{comaj}}}

\newcommand{\wtmajlk}{\overline{\mathrm{maj}}_k}
\newcommand{\wtmajrk}{\overline{\mathrm{comaj}}_k}

\newcommand{\wtchrk}{\overline{\mathrm{chr}}_k}

To complete the proof of Proposition~\ref{prop:mainA} we introduce the notion of a \emph{weighted $k$-forest}.  This is simply a $k$-forest $F$ together with a  function $\wt:V(F) \rightarrow \{1,2,3,\ldots\}$ that assigns a positive integer weight to each vertex.   We define the \emph{weighted hook length} of a vertex $v$ in such a forest  by $\wthk(v)=\sum_{u \in H(v)} \wt(u)$.  This induces weighted versions of all statistics defined in terms of hook lengths, such as $\wthk_L(v)$, $\wthk_R(v)$, $\wtmajl(F)$, $\wtmajlk(F)$, $\wtchrk(F)$ and so on.

\begin{lemma}
	Identities~\eqref{eq:lemmaid} are valid for any 
	$\barF \in \barroot_{kn}$ and $F = \join{\barF} \in \R_n^k$.	
	\label{thm:majrel}
\end{lemma}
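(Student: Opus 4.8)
The plan is to compare the forest $\barF \in \barroot_{kn}$ directly with $F=\join{\barF} \in \R_n^k$ on a vertex-by-vertex basis, exploiting the structure of $\barroot_{kn}$ described in~\eqref{eq:defbarroot}. The key observation is that $F$ is obtained from $\barF$ by collapsing each path $j_1 \to j_2 \to \cdots \to j_k$ to a single vertex $j$, so the hook at $j$ in $F$ has vertex set in bijection (via the collapsing map) with the hook at $j_1$ in $\barF$. Consequently $h_{\barF}(j_1) = k\cdot h_F(j)$, and more generally $h_{\barF}(j_i) = h_{\barF}(j_1) - (i-1) = k\cdot h_F(j) - (i-1)$ for each $i \in [1,k]$, since $j_{i}$ sits $i-1$ steps down the path from $j_1$ and each step removes exactly one vertex. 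The whole of $\majl(\barF)$ splits as $\sum_{j=1}^n \sum_{i=1}^k h_L^{\barF}(j_i)$, so I would analyze the contribution of the block $\{j_1,\ldots,j_k\}$ for each fixed $j$.

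First I would fix $j$ and sort the non-path children of the block. In $\barF$, the children of vertex $j_i$ are: the vertex $j_{i+1}$ (for $i<k$), which is a path-child, plus some set of vertices $r_s$ with $r\neq j$; under $\joinmap$ these latter become the children of $j$ in $F$, and edge $(j_i,r_s)$ receives colour $k-i$. So if $v=r_s$ is a child of $j$ in $F$ sitting on edge of colour $c$, then in $\barF$ it is a child of $j_{k-c}$, and its hook $H_{\barF}(v)$ is disjoint from the block and maps isomorphically (under collapsing) to $H_F(v)$, giving $h_{\barF}(v)=h_F(v)$. Now for the left hook length: $h_L^{\barF}(j_i) = \sum\{h_{\barF}(w) : w \text{ a child of } j_i,\ w<j_i\}$. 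Separating the path-child $j_{i+1}$ from the non-path children, and noting that $j_{i+1}>j_i$ always (so $j_{i+1}$ never contributes to a left hook length — it only ever contributes to $h_R^{\barF}(j_i)$), the only subtlety is whether a non-path child $r_s<j_i$, i.e. whether $r_s<j_i$ as elements of $S$; since $r\neq j$, $r_s<j_i \iff r<j$, which is exactly the condition $v<j$ in $F$ — independent of $i$. Hence $\sum_{i=1}^k h_L^{\barF}(j_i) = \sum_{i=1}^k h_{\barF}(j_{i+1})\cdot[\,j_{i+1}<j_i\,] + \sum_{i=1}^k\ \sum_{\substack{v \text{ child of } j \text{ in } F\\ \text{edge colour } k-i,\ v<j}} h_F(v)$, where the first sum vanishes term by term as just noted, so this equals $\sum_{\substack{v \text{ child of }j\text{ in }F\\ v<j}} h_F(v) = h_L^F(j)$ — but wait, this would only give $\majl(\barF)=\majl(F)$, which is wrong. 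The correction comes from a \emph{different} grouping: the quantity $\majl(\barF)$ should be reorganized not as a sum over which vertex is the parent, but keeping the weighting right.

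\textbf{Reconciling the constants via weighted forests.} The clean way — and this is surely why the authors introduced weighted $k$-forests — is to realize $\barF$ as a weighted contraction of $F$. Assign to each vertex $v$ of $F$ the weight $\wt(v)=k$; then $\wthk_F(v)=k\cdot h_F(v)$. But $\majl(\barF)$ is \emph{not} $\wtmajl(F)$ because the path $j_1,\ldots,j_k$ in $\barF$ contributes the left-hook-lengths of the \emph{internal} vertices $j_2,\ldots,j_k$ as well, and each $h_{\barF}(j_i)=k\,h_F(j)-(i-1)$ rather than $k\,h_F(j)$. The plan is thus: (i) express $\majl(\barF)=\sum_{v\in V(F)}\big(\text{contribution of the block over }v\big)$; (ii) show the block over $j$ contributes $h_L^F(j)$ from its non-path children (as computed above, with the colour label $k-i$ matching the weight $\chr_k$ correctly once the internal-vertex count is folded in) \emph{plus} a purely combinatorial constant coming from the path edges $j_i\to j_{i+1}$ inside $\barF$ with $j_{i+1}>j_i$: these contribute $\sum_{i=1}^{k-1} h_{\barF}(j_{i+1}) = \sum_{i=2}^{k}\big(k\,h_F(j)-(i-1)\big)$ to the right-hook side, not the left, so the left side must instead pick up the $\chr_k$ term. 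I would carefully recompute: $h_L^{\barF}(j_i)$ receives contributions from children $r_s$ of $j_i$ with $r<j$, each weighted by $h_{\barF}(r_s)=h_F(r_s)$, \emph{and from no path-child}; summing over $i$, a child $v$ of $j$ in $F$ with colour $c=k-i$ and $v<j$ contributes $h_F(v)$ exactly once. That still gives $h_L^F(j)$, so I must have the colour-weighting enter through $\majlk$, i.e. $\chr_k(F)=\sum_{(u,v)}\col(u,v)h(v)$: the discrepancy $\majl(\barF)-\majl(F)$ must equal $n\binom{k}{2}+(k-1)\majl(F) + \chr_k(F)\cdot(\text{something})$...

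\textbf{Main obstacle.} The real content — and the step I expect to be the crux — is the bookkeeping that untangles \emph{which} of the three pieces ($n\binom{k}{2}$, the factor $k$ on $\majlk$, and the colour weight $\chr_k$) each sub-sum lands in. Concretely: for a non-path child $v$ of $j$ with colour $c$, in $\barF$ it hangs off $j_{k-c}$, and the \emph{ancestors of $v$ within the block} are $j_1,\ldots,j_{k-c}$, i.e. $k-c$ of them; so in the sum $\sum_{i} h_L^{\barF}(j_i)$ (resp. without the left restriction) the weight $h_{\barF}(v)=k\,h_F(v)$ (note: when we build $\barF$ from $f$ directly, $h_{\barF}(v)$ counts vertices $r_s$, which is $k$ times the $F$-hook) gets counted once at level $j_{k-c}$, contributing $k\,h_F(v)$, i.e. the factor-of-$k$; whereas the internal path structure contributes, for each $j$, the constants $\sum_{i=2}^k(i-1)=\binom{k}{2}$ summed over all $n$ values of $j$ (giving $n\binom{k}{2}$) together with $k$ copies of $\majl(F)$-type terms. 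Making this split precise — in particular verifying that the colour term $\chr_k(F)=\sum_{(u,v)}(k-\col(u,v)-1+1)h(v)$ is exactly what is picked up by the ``extra'' ancestors $j_1,\ldots,j_{k-c-1}$ of $v$ sitting \emph{above} $j_{k-c}$ in the block, each contributing one more copy of the relevant hook-length difference — is the heart of the lemma. I would organize it as a single identity
\[
	\majl(\barF) \;=\; \sum_{j=1}^n\ \Big[\, h_L^F(j) \;+\; \sum_{i=1}^{k-1} h_L^{\barF}(j_i)\big|_{\text{path part}}\,\Big]
\]
and then evaluate the path part using $h_{\barF}(j_{i+1})=k\,h_F(j)-i$, checking that $j_{i+1}<j_i$ is \emph{never} true so the path part of the \emph{left} hook is $0$ — which forces the colour term to come entirely from reassigning non-path children to the right ancestor level, i.e. from the identity $\sum_{i}(\text{child }v\text{ counted at level }j_{k-\col(v)}\text{ and all levels above it})$. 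The comajor identity $\majr(\barF)=k\cdot\majr(F)$ should then follow by an entirely parallel (and slightly easier, since no additive constant appears) computation, or alternatively by applying the already-established major identity together with $\depth(\barF)=\sum_v(h_{\barF}(v)-1)$ and the corresponding decomposition of $\depth$. I would double-check the final arithmetic against Example~\ref{ex:barFvalues}: $\majl(\barF)=44$ should equal $n\binom{k}{2}+k\majlk(F)=10\cdot1+2\cdot17=44$, and $\majr(\barF)=12=2\cdot6=k\majr(F)$, which confirms the shape of the two identities.
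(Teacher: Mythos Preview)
Your proposal contains a fundamental orientation error that propagates through every computation. By~\eqref{eq:defbarroot}, vertex $j_i$ is a \emph{child} of $j_{i+1}$ in $\barF$, not the other way around; the path within a block runs $j_k \to j_{k-1} \to \cdots \to j_1$ with $j_k$ nearest the root. Consequently it is $h_{\barF}(j_k)$ that equals $k\cdot h_F(j)$, not $h_{\barF}(j_1)$, and your formula $h_{\barF}(j_i) = k\, h_F(j) - (i-1)$ is false even after the direction is fixed: the difference $h_{\barF}(j_{i+1}) - h_{\barF}(j_i)$ equals $1$ \emph{plus} the total hook size of the non-path children hanging off $j_{i+1}$. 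Your claim that the path-child ``never contributes to a left hook length'' is likewise backwards: the path-child of $j_i$ is $j_{i-1} < j_i$, so it \emph{always} contributes to $h_L^{\barF}(j_i)$, and these contributions are precisely where both the factor $k$ and the additive $n\binom{k}{2}$ come from. Relatedly, a non-path child of $j_i$ is necessarily of the form $r_k$ (since every $r_s$ with $s<k$ already has parent $r_{s+1}$), a fact you never use but which is essential to get $h_{\barF}(r_k)=k\,h_F(r)$.

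Beyond the orientation issue, the proposal never actually completes a computation: you repeatedly reach a contradiction or an unresolved discrepancy (``but wait, this would only give $\majl(\barF)=\majl(F)$''; ``Making this split precise\ldots is the heart of the lemma'') and defer the resolution. The paper sidesteps this bookkeeping by arguing \emph{iteratively} rather than block-by-block: it views $\barF$ as a weighted $k$-forest (all weights $1$, all colours $0$) and collapses the path-edges $(j_{i+1},j_i)$ one at a time, proving once that a single collapse decreases $\wtmajlk$ by the weight of the absorbed vertex and leaves $\wtmajr$ unchanged. After all $n(k-1)$ collapses the absorbed weights sum to $n\binom{k}{2}$ and one is left with $F$ with every vertex weighted $k$, whence both identities. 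A direct decomposition along your lines can be made to work once the orientation is corrected, but the sum $\sum_{i=2}^{k} h_{\barF}(j_{i-1})$ of path-child contributions --- not the non-path part --- is what carries $n\binom{k}{2}$, the extra $k-1$ copies of $\majl(F)$, and the $\chr_k$ term, and you would need to unpack it accordingly.
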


\begin{proof}
We begin with a general observation regarding weighted $k$-forests.
Let $W$ be such a forest with edge colouring $\col : E(W) \rightarrow 
[k-1]$, and let edge $(x,y) \in E(W)$ be such that
\begin{itemize}
\item[(A)]	 $x$ and $y$ have consecutive labels, with $x > y$, and
\item[(B)]  $\col(x,y)=0$.
\end{itemize}
Let $W'$ be the weighted $k$-forest obtained by \emph{collapsing} edge $(x,y)$ as follows: 
\begin{enumerate}
\item	 replace each edge $(y,v)$ with an edge $(x,v)$ coloured $\col(y,v)+1$,
\item	 increase $\wt(x)$ by $\wt(y)$, and
\item	 remove edge $(x,y)$ and vertex $y$.
\end{enumerate}
This  is illustrated in Figure~\ref{fig:collapsing}.  Our interest in this process arises from the fact that the $\joinmap$ map can be viewed as iteratively collapsing edges of a weighted $k$-forest.
\begin{figure}[tbp]
	\centering
	\includegraphics[width=.95\textwidth]{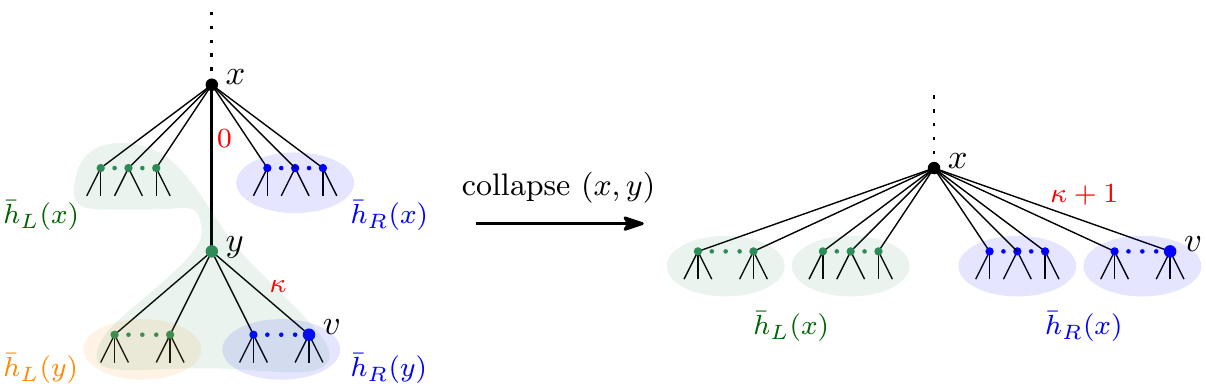}
	\caption{Collapsing edge $(x,y)$ of a weighted $k$-forest. Each edge $(y,v)$ with colour $\kappa$ is replaced by an edge $(x,v)$ with colour $\kappa+1$. The weight of vertex $x$ on the right is the sum of the weights of vertices $x$ and $y$ on the left.}
	\label{fig:collapsing}
\end{figure}

Observe that collapsing $(x,y)$ leaves $\wthk_L(u)$ and $\wthk_R(u)$ unchanged 
for all vertices $u \neq x,y$, whereas  $\wthk_L(x)$ and $\wthk_R(x)$ 
increase by $\wthk_L(y)-\wthk(y)$ and $\wthk_R(y)$, respectively, as a result 
of Condition (A). 
 The net changes to  major and comajor indices are therefore 
$(\wthk_L(y)-\wthk(y)) - \wthk_L(y)=-\wthk(y)$ and $\wthk_R(y)-\wthk_R(y)=0$, 
respectively, where the terms $-\wthk_L(y)$ and $-\wthk_R(y)$ 
account for removal of $y$.  That is,
\begin{equation}
\label{eq:weight}
\wtmajl(W')=\wtmajl(W)- \wthk(y)
\qquad\text{and}\qquad
\wtmajr(W')=\wtmajr(W).
\end{equation} 
Using Condition (B) we also have
\begin{align}
\label{eq:wtchrk}
\wtchrk(W') &= \wtchrk(W)-\underbrace{\col(x,y)}_{=0} \wthk(y)-\sum_{e=(y,v)} 
\col(e)\wthk(v) +
\sum_{e=(y,v)} (\col(e)+1)\wthk(v) \notag \\ 
 &= \wtchrk(W) +
\sum_{e=(y,v)} \wthk(v)  \\ 
&= \wtchrk(W)+\wthk(y)-\wt(y). \notag
\end{align}
Together, ~\eqref{eq:weight} and~\eqref{eq:wtchrk} give
\begin{equation}
\label{eq:iterate}
\wtmajlk(W')=\wtmajlk(W)-\wt(y).
\end{equation}

We are now prepared to prove the lemma.  Let the vertices of $\barF$ be labelled by $S$ as defined
in \eqref{eq:labsverts}.
Assign colour 0 to each edge  of $\barF$ and weight 1 to each of its vertices to transform it into a weighted $k$-forest $W^*$. Clearly $\majl(\barF)=\wtmajlk(W^*)$ and $\majr(\barF)=\wtmajr(W^*)$.

Transform $W^*$ by iteratively collapsing edges $(1_2,1_1), (1_3,1_2),\ldots,(1_k,1_{k-1})$ in that order. This yields a sequence of forests $W^*=W_1,W_2,\ldots,W_k$ such that vertex $1_i$ has weight $i$ in $W_i$.  Moreover,~\eqref{eq:weight} and~\eqref{eq:iterate} give
$$
	\wtmajr(W_k) = \wtmajr(W^*)= \majr(F^*)
$$
and
\begin{align*}
\wtmajlk(W_k)=\wtmajlk(W^*)-(1+2+\cdots+(k-1)) 
=\majl(\barF)-\binom{k}{2}
\end{align*}
Continue to iterate by collapsing edges $(j_2,j_1), (j_3,j_2),\ldots,(j_k,j_{k-1})$, for each $j=2,\ldots,n$. 
When complete, this results in a weighted $k$-forest ${W}$ on vertices $\{1_k,2_k,\ldots,j_k\}$ such that
\begin{itemize}
\item 	every vertex has weight $k$,
\item	$\wtmajlk(W)=\majl(\barF)-n\binom{k}{2}$, and
\item	$\wtmajr(W)=\majr(\barF)$.
\end{itemize}
  Clearly ${W}$ is precisely the $k$-forest $F=\joinmap(\barF)$ but with vertices weighted $k$, so we have $\wtmajlk({W})=k \cdot \majlk(F)$ and $\wtmajr(W)=k\cdot\majr(F)$. Therefore  $\majl(\barF)=k\cdot \majlk(F)+n\binom{k}{2}$ and $\majr(\barF)=k\cdot\majr(F)$, as desired.
\end{proof}

Finally, we  prove the remainder of Theorem \ref{thm:maintheorem}.

\begin{proposition}
	Let $f = \genkfact{k}{n} \in \F_n^k$, and $F = \jcdab{f}$.
	Then $\sarealk(f) = k\cdot  \majl(F)$ and
	$\areauk(f) = k \cdot \majrk(F)$.
	\label{prop:mainB}
\end{proposition}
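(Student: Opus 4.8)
The plan is to deduce Proposition~\ref{prop:mainB} from Proposition~\ref{prop:mainA} by pure bookkeeping, without rerunning the edge-collapsing analysis of Lemma~\ref{thm:majrel}. The idea is that a handful of ``conservation laws'' tie the four area-type statistics on $\F_n^k$ to the four major-type statistics on $\R_n^k$, and Proposition~\ref{prop:mainA} already pins down two of the four correspondences --- namely $\arealk(f)=k\majlk(F)$ and $\sareauk(f)=k\majr(F)$ --- together with the hook-length identity $a_j^k-a_j^0=k\,h(j)$. The remaining two identities, $\sarealk(f)=k\majl(F)$ and $\areauk(f)=k\majrk(F)$, will then drop out by subtraction.

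First I would record the two elementary identities on the forest side. Since $\chr_k(F)+\cochr_k(F)=\sum_{e=(u,v)\in E(F)}(k-1)\,h(v)=(k-1)\depth(F)$ and $\majl(F)+\majr(F)=\depth(F)$ (both noted in the introduction), adding them gives $\majlk(F)+\majrk(F)=k\,\depth(F)$. Next, on the factorization side, I would sum the identity $a_j^k-a_j^0=k\,h(j)$ of Proposition~\ref{prop:mainA} over $j\in[1,n]$ and use $\sum_{j}h(j)=\depth(F)+n$ --- valid because every non-root vertex of $F$ is the child-endpoint of a unique edge while the root hook lengths sum to the total number of vertices $n$, so $\depth(F)=\sum_{v}h(v)-n$. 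A direct computation from the definitions of the area statistics then yields
\begin{align*}
	\sarealk(f)+\sareauk(f) &= \sum_{j=1}^n (a_j^k-a_j^0)-kn = k\,\depth(F),\\
	\arealk(f)+\areauk(f) &= k\sum_{j=1}^n (a_j^k-a_j^0)-kn-2n\binom{k}{2} = k^2\,\depth(F).
\end{align*}

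With these in hand the proof finishes immediately. Using $\sareauk(f)=k\majr(F)$ from Proposition~\ref{prop:mainA},
$$
	\sarealk(f)=k\,\depth(F)-\sareauk(f)=k\bigl(\depth(F)-\majr(F)\bigr)=k\majl(F),
$$
and using $\arealk(f)=k\majlk(F)$ from the same proposition,
$$
	\areauk(f)=k^2\,\depth(F)-\arealk(f)=k\bigl(k\,\depth(F)-\majlk(F)\bigr)=k\majrk(F).
$$

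I do not expect a genuine obstacle, only care with constants: the one nontrivial step is the second conservation law $\arealk(f)+\areauk(f)=k^2\depth(F)$, where the terms $\binom{kn}{2}$, $n\binom{k}{2}$ and the various ``$-1$'' shifts in the definitions must be tracked so that everything collapses to $k^2\depth(F)$ (using $2\binom{k}{2}=k(k-1)$). A more structural alternative would be to build an involution on $\F_n^k$ from $x\mapsto kn-x$ on $[kn]$ together with reversal of the factor order, show it is intertwined by $\jcdabmap$ with the label-reversal-plus-colour-flip involution on $\R_n^k$, and observe that it exchanges $(\arealk,\sareauk)\leftrightarrow(\areauk,\sarealk)$ while the forest involution exchanges $(\majlk,\majr)\leftrightarrow(\majrk,\majl)$; but checking the compatibility of $\jcdabmap$ with this involution in the presence of the ``least element first'' conventions on cycles is fiddlier than the direct computation, so I would present the latter.
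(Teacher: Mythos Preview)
Your argument is correct: the two conservation laws $\sarealk(f)+\sareauk(f)=k\,\depth(F)$ and $\arealk(f)+\areauk(f)=k^2\,\depth(F)$ follow from the hook-length identity $a_j^k-a_j^0=k\,h(j)$ of Proposition~\ref{prop:mainA} together with $\sum_j h(j)=\depth(F)+n$, and the constant-chasing you flag all cancels as written. Combined with $\majl(F)+\majr(F)=\depth(F)$ and $\majlk(F)+\majrk(F)=k\,\depth(F)$, the two identities of Proposition~\ref{prop:mainB} drop out by subtraction from those of Proposition~\ref{prop:mainA}.

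This is a genuinely different route from the paper's proof. The paper does not invoke these conservation laws; instead it introduces a second injection $\hatbrkmap:\F_n^k\to\F_{kn}$ based on the \emph{upper} decomposition of each cyclic factor, reruns the area/major bookkeeping of~\eqref{eq:compareareas}--\eqref{eq:barfbarF} with $\hatf$ in place of $\barf$, proves an analogue of Lemma~\ref{thm:majrel} for the resulting forest $\hatF$, and then observes that $\joinmap\circ\cdamap\circ\hatbrkmap$ coincides with $\jcdabmap$. Your approach is shorter and avoids the parallel machinery entirely, at the cost of relying on the hook-length clause of Proposition~\ref{prop:mainA} (which the paper's proof of Proposition~\ref{prop:mainB} does not use). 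The paper's approach, on the other hand, yields the additional structural fact that the upper-decomposition route lands on the same bijection $\jcdabmap$, which is of some independent interest but not needed for the statement at hand.
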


\begin{proof}
	The proof is similar to that of Proposition \ref{prop:mainA}, with
	one modification:  we use a different though analogous
	intermediate function $\hatbrkmap : \F_n^k \rightarrow
	\R_{kn}$ defined by replacing each factor of a $k$-factorization with its \emph{upper} decomposition; see~\eqref{eq:upperdecomp}.  Thus if $f = \genkfact{k}{n} \in \F_n^k$,
	then $\hatf := \hatbrk{f}$ is obtained by expanding the factor $(a_j^0\, \cdots a_j^k)$ into  $(a_j^0\, a_j^k) (a_j^1\, a_j^k)
	\cdots (a_j^{k-1}\, a_j^k)$.  We label the factors
	of $\hatf$ with the ordered set $S$ of
	\eqref{eq:labsverts}, assigning factor $(a_j^i\; a_j^k)$ the label
	$j_{i+1}$, and this labelling propagates  to the resulting
	forests.
	  We immediately obtain the
	relationship
	\begin{equation}\label{eq:areaanal}
		\areauk(f) = \areau(\hatf) - n \cdot \binom{k}{2}\;\;\;
		\textnormal{ and }\;\;\; \sarealk(f) = \areal(\hatf),	
	\end{equation}
	which is analogous to \eqref{eq:compareareas}.  
	Clearly $\hatbrkmap$, like $\brkmap$,
	is injective.  Let $\hatroot_{kn}=\cdamap \circ \hatbrkmap(\hatfset_{kn})$.  Through the obvious analogue of Lemma~\ref{lem:helpful}, this subset of $\R_{kn}$ is seen to be characterized by
	\begin{equation*}
		\hatroot_{kn}= \{F \in \R_n^k\,:\, 
			\text{vertex $j_{{i+1}}$ is a child of $j_{i}$ for all $j \in [1,n]$,  $i \in[k-1]$}\}.
		\label{eq:defhatroot}
	\end{equation*}
	Setting $\hatF = \cda{\hatf}$, we know from
	Theorem \ref{thm:kone} that
	\begin{equation}\label{eq:areaforestanal}
		\areau(\hatf) = \majr(\hatF)\qquad  \textnormal{
		and }\qquad \areal(\hatf) = \majl(\hatF).
	\end{equation}
	We define  $\joinmap : \hatroot_{kn} \rightarrow
	\R_n^k$ as before, and similar to 
	 Lemma \ref{thm:majrel} we find for $F = \joinmap(\hatF)$ that
	\begin{equation}\label{eq:hatforestanal}
		\majr(\hatF) = n \cdot \binom{k}{2} + k \cdot
		\majrk(F)\qquad \textnormal{ and }\qquad \majl(\hatF) = k
		\cdot \majl(F).
	\end{equation}
	Finally, it is easy to see that the composition
	$\joinmap \circ \cdamap \circ \hatbrkmap : \F_n^k
	\longrightarrow \R_n^k$ is precisely 
$\jcdabmap : \F_n^k
	\longrightarrow \R_n^k$.  Combining \eqref{eq:areaanal},
	\eqref{eq:areaforestanal} and \eqref{eq:hatforestanal} completes the proof.
\end{proof}

\begin{example}
	Let $f \in \F_{10}^2$ be as in Example
	\ref{ex:genkbreak}. Then  $\hatf = \hatbrk{f}$ is
		\begingroup\makeatletter\def\f@size{10}\check@mathfonts
\def\maketag@@@#1{\hbox{\m@th\large\normalfont#1}}%
	\begin{align*}
		\hatf=
		\underset{1_1}{(0\; 4)}\underset{1_2}{(1\;
		4)}\underset{2_1}{(6\; 8)}\underset{2_2}{(7\;
		8)}\underset{3_1}{(13\; 17)}\underset{3_2}{(16\;
		17)}\underset{4_1}{(5\; 9)}&\underset{4_2}{(6\; 9)}
		\underset{5_1}{(18\; 20)}\underset{5_2}{(19\; 20)}
		\underset{6_1}{(0\; 18)}\underset{6_2}{(13\; 18)}
		\underset{7_1}{(10\; 12)}\underset{7_2}{(11\; 12)}\\
		&\underset{8_1}{(5\; 13)}\underset{8_2}{(10\; 13)}
		\underset{9_1}{(2\; 4)}\underset{9_2}{(3\; 4)}
		\underset{10_1}{(14\; 16)}\underset{10_2}{(15\; 16)}.
	\end{align*} \endgroup
	The arch diagram of $\hatf$ and its dual are given in Figure
	\ref{fig:genkbreakhat}. We see that 
	\begin{equation}\label{eq:compareas2}
\begin{aligned}
	\areauk(f) &= k \sum_{j=1}^n (a_j^k - 1) - \binom{kn}{2} - n
	\cdot \binom{k}{2}  = 222 - 190 - 10 = 22,\\
	 \areau(\hatf) &= 32,
\end{aligned}
\end{equation}
confirming the first half of
\eqref{eq:areaanal};  also,
\begin{equation}\label{eq:compsemiarea}
	\sarealk(f) = \binom{kn}{2} - \sum_{i=0}^{k-1} \sum_{j=1}^n a_j^i = 190 -
	174 = 16 = \areal(\hatf),
\end{equation}
confirming the second half. Figure
\ref{fig:genkbreakhat} also shows $\hatF = \cda{\hatf}$ and
$F = \jcdab{f}$.  There we find  
$\majl(\hatF) = 16$, $\majr(\hatF) =32$,
$\majl(F) = 8$, and $\majrk(F) = \majr(F) +
\cochr_k(F) = 6+5=11$.
Comparison with \eqref{eq:compareas2} and
\eqref{eq:compsemiarea} confirms \eqref{eq:areaforestanal} and  \eqref{eq:hatforestanal}. 
\end{example}
\begin{figure}[tbp]
	\centering
	\includegraphics[width=15cm]{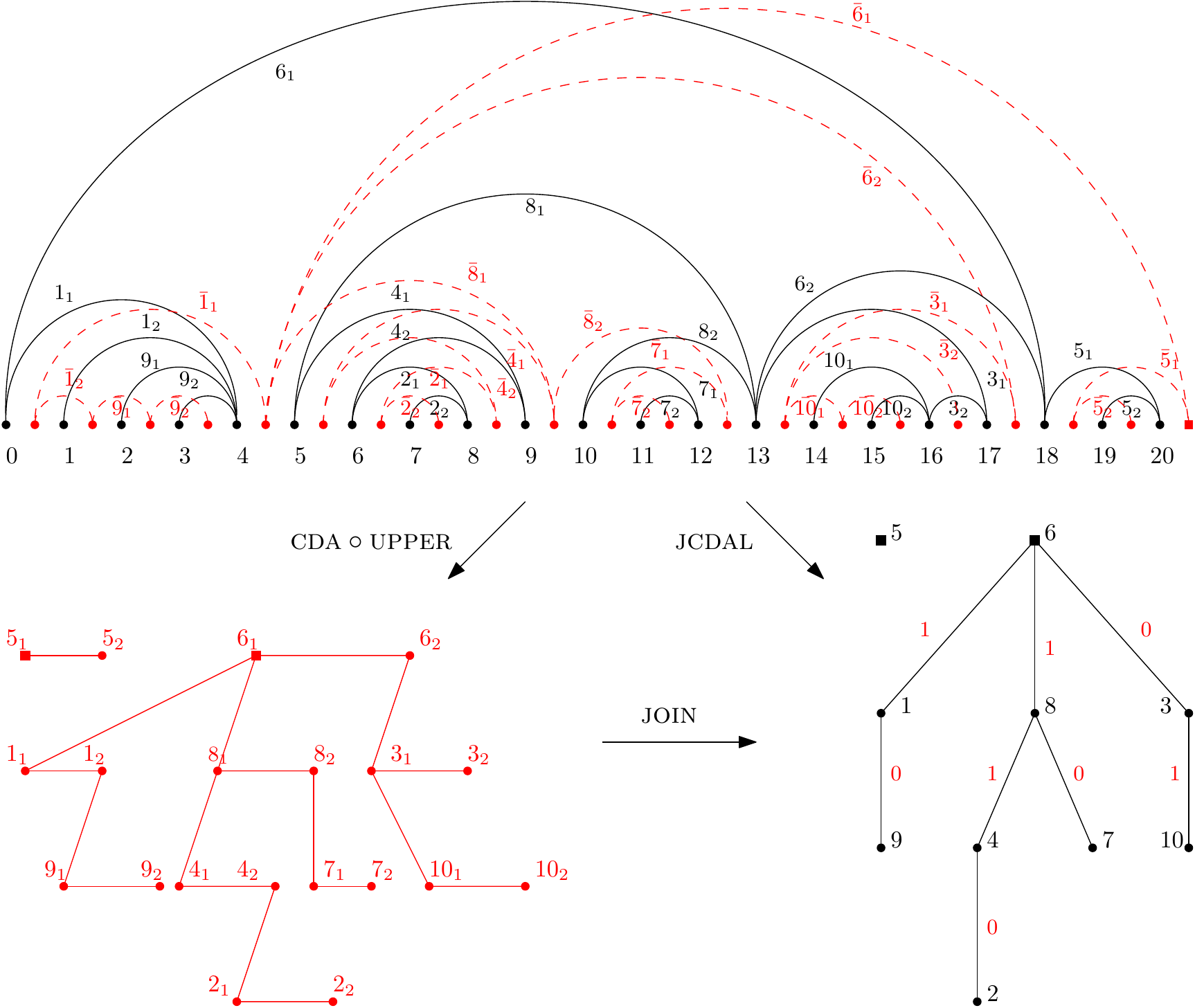}
	\caption{The arch diagram and dual of $\hatf = \hatbrk{f}$,
where $f$ is given in Example \ref{ex:genkbreak}.  Also shown are
$\hatF=\cda{\hatf}$ (bottom left) and $F = \joinmap(\hatF)=\jcdab{f}$ (bottom right).}
	\label{fig:genkbreakhat}
\end{figure}
\bibliographystyle{amsalpha}
\bibliography{kparking}

\providecommand{\bysame}{\leavevmode\hbox to3em{\hrulefill}\thinspace}
\providecommand{\MR}{\relax\ifhmode\unskip\space\fi MR }
\providecommand{\MRhref}[2]{%
  \href{http://www.ams.org/mathscinet-getitem?mr=#1}{#2}
}
\providecommand{\href}[2]{#2}
\begin{thebibliography}{MNW20}

\bibitem[Bia02]{biane}
P.~Biane, \emph{Parking functions of types {A} and {B}}, Electron. J. Combin.
  \textbf{9} (2002).

\bibitem[BW89]{bjorn}
A.~Björner and M.~L. Wachs, \emph{$q$-{H}ook length formulas for forests}, J.
  Combin. Theory Ser. A \textbf{52} (1989), 165–187.

\bibitem[Dé59]{denes}
J.~Dénes, \emph{The representation of a permutation as the product of a
  minimal number of transpositions and its connection with the theory of
  graphs}, Publ. Math. Inst. Hungar. Acad. Sci. \textbf{4} (1959), 63–70.

\bibitem[GP16]{grady-poz}
A.~Grady and S.~Poznanović, \emph{Sorting index and {M}ahonian-{S}tirling
  pairs for labeled forests}, Adv. in Appl. Math. \textbf{80} (2016), 93–113.
  \MR{3537241}

\bibitem[GS06]{gesselseo}
Ira~M. Gessel and Seunghyun Seo, \emph{A refinement of {C}ayley's formula for
  trees}, Electron. J. Combin. \textbf{11} (2004/06), no.~2, Research Paper 27,
  23. \MR{2224940}

\bibitem[GY02]{gouldenyong}
I.P. Goulden and A.~Yong, \emph{Tree-like properties of cycle factorizations},
  J. Combinat. Theory Ser. A. \textbf{98} (2002), 106–117.

\bibitem[Hag08]{hag}
J.~Haglund, \emph{The $q, t$-catalan numbers and the space of diagonal
  harmonics: With an appendix on the combinatorics of macdonald polynomials},
  American Mathematical Society, 2008.

\bibitem[IR21]{irvrat}
J.~Irving and A.~Rattan, \emph{Trees, parking functions and factorizations of
  the full cycle}, European J. Combin. \textbf{93} (2021).

\bibitem[Irv09]{irving1}
J.~Irving, \emph{Minimal transitive factorizations of permutations into
  cycles}, Canad. J. Math. (2009), 1092–1117.

\bibitem[Kre80]{kreweras}
G.~Kreweras, \emph{Une famille de polyn\^omes ayant plusiers propriétés
  énumeratives}, Periodica Math. Hung. \textbf{11} (1980), 309–320.

\bibitem[LW92]{liangwachs}
K.~Liang and M.~L. Wachs, \emph{Mahonian statistics on labeled forests},
  Discrete Math. \textbf{99} (1992), 181–197.

\bibitem[MNW20]{williams}
Henri M\"{u}hle, Philippe Nadeau, and Nathan Williams, \emph{{$k$}-indivisible
  noncrossing partitions}, S\'{e}m. Lothar. Combin. \textbf{81} (2020), Art.
  B81d, 23. \MR{4097429}

\bibitem[Mos89]{mosz}
P.~Moszkowski, \emph{A solution to a problem of {D}énes: a bijection between
  trees and factorizations of cyclic permutations}, European J. Combinatorics
  \textbf{10} (1989), 13–16.

\bibitem[MR68]{mallowsriordan}
C.~Mallows and J.~Riordan, \emph{The inversion enumerator for labeled trees},
  Bull. Amer. Math. Soc. \textbf{74} (1968), 92–94.

\bibitem[Sta97]{stanleyparking}
Richard~P. Stanley, \emph{Parking functions and noncrossing partitions},
  Electron. J. Combin. \textbf{4} (1997), no.~2, Research Paper R20, 14 p., The
  Wilf Festschrift (Philadelphia, PA, 1996). \MR{1444167}

\bibitem[Yan97]{yank}
C.~H. Yan, \emph{Generalized tree inversions and $k$-parking functions}, J.
  Combin. Theory Ser. A \textbf{79} (1997), 268–280.

\bibitem[Yan15]{yanhandbook}
\bysame, \emph{Parking functions}, The Handbook of Enumerative Combinatorics
  (M.~Bóna, ed.), Discrete Math. Appl. (Boca Raton), CRC Press, 2015,
  p.~835–893. \MR{3409354}

\end{thebibliography}
\end{document}